\newcommand{\Q}{{\mathbb Q}}
\newcommand{\Z}{{\mathbb Z}}
\newcommand{\R}{{\mathbb R}}
\newcommand{\p}{{\mathfrak p}}
\newcommand{\fQ}{{\mathfrak Q}}
\newcommand{\fq}{{\mathfrak q}}
\newcommand{\fP}{{\mathfrak P}}
\newcommand{\OO}{{\mathcal O}}
\newcommand{\gen}{{\text{gen}}}
\newcommand{\sign}{\operatorname{sign}}
\newcommand{\Gal}{\operatorname{Gal}}
\newcommand{\Spl}{\operatorname{Spl}}
\newcommand{\disc}{\operatorname{disc}}
\newcommand{\Cl}{\operatorname{Cl}}
\newcommand{\rsp}{\raisebox{0em}[2.7ex][1.3ex]{\rule{0em}{2ex} }}
\newcommand{\ds}{\displaystyle}
\newcommand{\eq}{\stackrel{2}{=}}
\newcommand{\corr}{\stackrel{\text{gal}}{\longleftrightarrow}}
\newcommand{\too}{\longmapsto}
\newcommand{\la}{\langle}
\newcommand{\ra}{\rangle}
\newtheorem{lemma}{Lemma}
\newtheorem{cor}{Corollary}
\newtheorem{prop}{Proposition}
\newtheorem{theorem}{Theorem}
\theoremstyle{definition}
\theoremstyle{remark}
\newtheorem{rem}{Remark}
\begin{document}

\title{Unramified Quaternion Extensions of Quadratic Number Fields}
\author{Franz Lemmermeyer}
\address{M\"orikeweg 1, 73489 Jagstzell, Germany}
\email{hb3@ix.urz.uni-heidelberg.de}

\maketitle

\section*{Introduction}

The first mathematician who studied quaternion extensions
($H_8$-extensions for short) was Dedekind \cite{Ded}; 
he gave $\Q(\sqrt{(2+\sqrt2)(3+\sqrt6)\,}\,)$ as an example.  
The question whether given quadratic or biquadratic number 
fields can be embedded in a quaternion extension was extensively 
studied by Rosenbl\"uth \cite{Ro}, Reichardt \cite{R}, 
Witt \cite{W}, and Damey and Martinet \cite{DM}; see Ledet
\cite{Led} and the surveys \cite{JY} and \cite{GSS} for more 
details. Later, Fujisaki \cite{Fuj}, Kiming 
\cite{K} and Vaughan \cite{V} gave simple constructions of 
$H_8$-extensions of $\Q$.

In \cite{BK}, Bachoc and Kwon studied 
$H_8$-extensions of cyclic cubic number fields from an 
arithmetic viewpoint. The corresponding problem for
certain sextic fields was dealt with by Jehanne \cite{Jeh} 
and Cassou-Nogu\`es and Jehanne \cite{CNJ96}.

Quaternion extensions of $\Q$ also played a central role in the
theory of the Galois module structure of the ring of integers of
algebraic number fields (see Martinet's papers \cite{Mar,Mar2}), and the
Introduction of \cite{Fr} for a detailed account).  As Cohn
\cite{Cohn2} showed, quaternion extensions can also be used to 
explain congruences between certain binary quadratic forms.

Since quaternion extensions of $\Q$ always ramify over
their biquadratic subfield (see Cor. \ref{C1} below), they
do not occur as Hilbert class fields of quadratic or
biquadratic number fields. In order to find unramified
$H_8$-extensions one has to look at base fields $\ne \Q$. 
Already Furtw\"angler \cite{Fw} knew that such extensions exist,
Koch \cite{Koc} gave explicit criteria that guarantee their
existence, and it was Kisilevsky \cite{Kis} who showed that the 
second Hilbert $2$-class field of e.g. $k = \Q(\sqrt{-30}\,)$ is an 
$H_8$-extension of $k$. Hettkamp \cite{Het} found criteria for the
existence of unramified $H_8$-extensions of certain real quadratic
number fields, and finally M.~Horie \cite{Hor} gave the first 
explicit example of such an extension. 
Recently, Louboutin and Okazaki \cite{Lo1,Lo2,LO} have computed
relative class numbers of quaternion CM-extensions of $\Q$ 
as well as of unramified quaternion extensions of real quadratic
number fields (\cite{LO2}).

In \cite{Le1,Le2} we have shown how to construct unramified
quaternion extensions of a number field $k$ which is a
quadratic extension of a field $F$, and $F$ is totally real 
and has odd class number in the strict sense. In this
article we will show that this construction can be carried 
out with completely elementary methods (along the same lines
as the classical papers of R\'edei \cite{Red} and R\'edei \& 
Reichardt \cite{RR}) as long as we 
restrict ourselves to quadratic number fields. 

\section{Preliminaries}

We begin by introducing some notation. Let $k$ be a quadratic 
number field with discriminant $d$. An extensions $K/k$ is said
to be unramified if $\disc(K/k) = (1)$, i.e. if no finite
primes ramify. The genus class field 
$k_\gen$ of $k$ is defined as the maximal unramified
extension of $k$ which is abelian over $\Q$. It is known
that $k_\gen$ is the compositum of all unramified
quadratic extensions of $k$, and that $k_\gen = 
\Q(\sqrt{d_1}, \ldots, \sqrt{d_t}\,)$, where 
$d = d_1\cdots d_t$ is the factorization of $d$ into prime
discriminants. Since we intend this article to be as 
self contained as possible, we will give a proof for the part
of genus theory we need. First, however, we recall some basic
facts from Hilbert's theory of ramification in Galois extensions
(see, for example, \cite{Cohn} for proofs). Let $K/F$ be a finite
normal extension of number fields and put $G = \Gal(K/F)$. 
Moreover, let $\fP$ be a prime ideal in $\OO_K$. Then the 
stabiliser of $\fP$, 
$$Z_\fP(K/F) = \{\sigma \in G: \fP^\sigma = \fP\},$$ 
is called the {\em decomposition group}, and 
its subgroup 
$$T_\fP(K/F) = \{\sigma \in Z_\fP(K/F): 
     \alpha^\sigma \equiv \alpha \bmod \fP 
	\text{ for all } \alpha \in \OO_K\}$$
the {\em inertia group} of $\fP$. The order of $T_\fP(K/F)$ is equal 
to the ramification index of $\fP$ in $K/F$. The residue class 
field $\OO_K/\fP$ is a finite extension of $\OO_F/\p$ with 
Galois group isomorphic to $Z_\fP/T_\fP$; in particular, 
$T_\fP$ is a {\em normal} subgroup of $Z_\fP$.
We will make use of the following properties of the Hilbert
sequence:

\begin{prop}\label{P0}
Let $K/F$ be a finite normal extension of number fields,
and let $\fP$ be a prime ideal in $\OO_K$ lying over the prime
ideal $\p$ in $\OO_F$. 
\begin{enumerate}
\item $\p$ splits completely in a subextension $k/F$ 
	($\p \in \Spl(k/F)$) if and 
	only if $Z_\p \subseteq \Gal(K/k)$;
\item Let $\p_k = \fP \cap \OO_k$ be the prime ideal in $\OO_k$
	lying below $\fP$; then $\p$ is unramified in $k/F$ 
	if and only if $T_\fP(K/k) \subseteq \Gal(K/k)$;
\item Let $T(K/F)$ be the subgroup of $\Gal(K/F)$ generated
	by all the inertia subgroups $T_\p(K/F)$; then the
	fixed field $k$ of $T(K/F)$ is the maximal unramified
	extension of $F$ contained in $K$, and $k/F$ is normal.
\end{enumerate}
\end{prop}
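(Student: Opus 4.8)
The plan is to reduce all three parts to one computation that relates the invariants $e,f$ of primes in a tower to the orders of decomposition and inertia groups, using the facts recalled above that $|T_\fP(K/F)| = e(\fP/\p)$ and $|Z_\fP(K/F)/T_\fP(K/F)| = f(\fP/\p)$, so that $|Z_\fP(K/F)| = e(\fP/\p)f(\fP/\p)$. Throughout write $H = \Gal(K/k)$, fix a prime $\fP$ of $\OO_K$ over $\p$, and set $\p_k = \fP \cap \OO_k$. The preliminary observation, immediate from the definitions, is that decomposition and inertia groups are compatible with passing to the subextension $K/k$:
\[ Z_\fP(K/k) = Z_\fP(K/F) \cap H, \qquad T_\fP(K/k) = T_\fP(K/F) \cap H, \]
since $Z_\fP(K/k)$ is by definition $\{\sigma\in H : \fP^\sigma=\fP\}$ and the defining congruence for $T$ is unchanged. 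Combining these with multiplicativity of $e$ and $f$ in the tower $F\subseteq k\subseteq K$ and cancelling orders yields the two index formulas
\[ e(\p_k/\p) = [T_\fP(K/F) : T_\fP(K/F)\cap H], \qquad e(\p_k/\p)f(\p_k/\p) = [Z_\fP(K/F) : Z_\fP(K/F)\cap H]. \]

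Part (2) now follows from the first formula: the prime $\p_k=\fP\cap\OO_k$ is unramified over $\p$ exactly when $e(\p_k/\p)=1$, i.e. when $T_\fP(K/F)\cap H = T_\fP(K/F)$, that is $T_\fP(K/F)\subseteq H$ (the inertia group occurring in the criterion being $T_\fP(K/F)$). For part (1), the second formula shows that this single prime $\p_k$ has $e=f=1$ precisely when $Z_\fP(K/F)\subseteq H$. To get \emph{complete} splitting one needs $ef=1$ for \emph{every} prime $\p_k'$ of $k$ over $\p$; since $G$ permutes the primes $\fP'\mid\p$ transitively with $Z_{\fP^\sigma}(K/F)=\sigma Z_\fP(K/F)\sigma^{-1}$, this holds iff $\sigma Z_\fP(K/F)\sigma^{-1}\subseteq H$ for all $\sigma\in G$. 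So $\p\in\Spl(k/F)$ iff the condition $Z_\fP\subseteq\Gal(K/k)$ holds for every prime $\fP$ above $\p$ (equivalently, $Z_\fP(K/F)$ lies in the normal core of $H$), and I would make this quantifier explicit in the statement.

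For part (3), the set of all inertia groups is permuted by conjugation, $\sigma T_\fP(K/F)\sigma^{-1} = T_{\fP^\sigma}(K/F)$, so the subgroup $T(K/F)$ they generate is normal in $G$; hence its fixed field $k$ is normal over $F$. Taking this particular $k$, we have $\Gal(K/k)=T(K/F)$, and part (2) applied at every prime shows $k/F$ is unramified, because each inertia group $T_\fP(K/F)$ is by construction contained in $T(K/F)$. Conversely, if $F\subseteq k'\subseteq K$ is unramified over $F$, then part (2) forces $T_\fP(K/F)\subseteq\Gal(K/k')$ for all $\fP$, whence $T(K/F)\subseteq\Gal(K/k')$ and therefore $k'\subseteq k$; this gives maximality.

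The computations with the two index formulas are routine once the intersection identities $Z_\fP(K/k)=Z_\fP(K/F)\cap H$ and $T_\fP(K/k)=T_\fP(K/F)\cap H$ are in place. The one genuine subtlety, and the step I would take care over, is the quantifier in part (1): the literal inclusion $Z_\fP\subseteq H$ for a single chosen $\fP$ only forces the one prime $\p_k=\fP\cap\OO_k$ to be split with $e=f=1$, whereas complete splitting of $\p$ requires all conjugates of $Z_\fP$ to lie in $H$. Handling this correctly via the transitivity of the Galois action on the primes above $\p$ is the main point to get right.
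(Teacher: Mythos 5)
Your proof is correct, but there is nothing in the paper to compare it against: Proposition \ref{P0} is quoted as standard background, with a pointer to Cohn's book, and no proof is given there. Your route --- reducing everything to the two index formulas $e(\p_k/\p)=[T_\fP(K/F):T_\fP(K/F)\cap H]$ and $e(\p_k/\p)f(\p_k/\p)=[Z_\fP(K/F):Z_\fP(K/F)\cap H]$ with $H=\Gal(K/k)$, via the intersection identities and multiplicativity of $e$ and $f$ in towers --- is the standard one and uses exactly the facts the paper recalls ($|T_\fP|=e$ and $Z_\fP/T_\fP$ isomorphic to the Galois group of the residue field extension).

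Your caveat about the quantifier in part (1) is well taken and worth keeping. As literally stated, $Z_\fP\subseteq\Gal(K/k)$ for a single $\fP$ only forces $e=f=1$ for the one prime $\fP\cap\OO_k$, and the equivalence with complete splitting genuinely fails when $\Gal(K/k)$ is not normal in $\Gal(K/F)$: take $\Gal(K/F)\simeq S_3$, $k$ the cubic subfield, and $\p$ with $Z_\fP=\Gal(K/k)$ of order $2$; then $Z_\fP\subseteq\Gal(K/k)$ but $\p$ has a factor of residue degree $2$ in $k$. Your corrected formulation (all conjugates of $Z_\fP$, i.e. the normal core of $\Gal(K/k)$) is the right one; in the paper's actual applications the subgroup is an index-two, hence normal, subgroup of $\Gamma$, so the imprecision is harmless there. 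Likewise, the paper's condition in part (2), $T_\fP(K/k)\subseteq\Gal(K/k)$, is vacuous as printed and clearly intends $T_\fP(K/F)\subseteq\Gal(K/k)$, which is what you prove.
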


\begin{cor}\label{ChM} (Chebotarev's Monodromy Theorem)
Let $k$ be a quadratic number field, and suppose that 
$K/k$ is unramified and that $K/\Q$ is normal. Then
the Galois group of $K/\Q$ is generated by elements
of order $2$ not contained in $\Gal(K/k)$.
\end{cor}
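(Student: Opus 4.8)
The plan is to combine the ramification properties collected in Proposition \ref{P0} with Minkowski's theorem that $\Q$ admits no nontrivial unramified extension. Write $G = \Gal(K/\Q)$ and $H = \Gal(K/k)$; since $k/\Q$ is quadratic, $H$ is a subgroup of index $2$ in $G$ (and $K/k$ is automatically normal, being a subextension of the normal extension $K/\Q$). The whole argument reduces to understanding the inertia groups $T_\fP(K/\Q)$ as $\fP$ ranges over the primes of $\OO_K$.

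First I would record the behaviour of inertia in the tower $\Q \subseteq k \subseteq K$: for a prime $\fP$ of $K$ one has $T_\fP(K/k) = T_\fP(K/\Q) \cap H$, directly from the definition of the inertia group in the excerpt, since an element of $\Gal(K/k)$ fixes $\OO_K/\fP$ pointwise exactly when it does so regarded as an element of $\Gal(K/\Q)$. Because $K/k$ is unramified, every prime of $\OO_k$ is unramified in $K/k$, so $T_\fP(K/k) = 1$ for all $\fP$; this is precisely the content of Proposition \ref{P0}(2). Hence $T_\fP(K/\Q) \cap H = 1$ for every $\fP$.

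Next I would exploit that $H$ has index $2$. A subgroup $T = T_\fP(K/\Q)$ with $T \cap H = 1$ maps injectively into $G/H \cong \Z/2\Z$, so $|T| \le 2$; and if $T \ne 1$, then its generator maps to the nontrivial coset, hence is an element of order $2$ not contained in $H$. Thus each inertia group of $K/\Q$ is either trivial or generated by an involution lying outside $\Gal(K/k)$.

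Finally I would invoke Proposition \ref{P0}(3): the subgroup $T(K/\Q)$ generated by all the inertia groups $T_\p(K/\Q)$ has fixed field equal to the maximal unramified extension of $\Q$ contained in $K$. By Minkowski's discriminant bound this extension is $\Q$ itself, so the fixed field is $\Q$ and therefore $T(K/\Q) = G$. Consequently $G$ is generated by its inertia groups, each of which is either trivial or generated by an element of order $2$ outside $H$, which is exactly the assertion of Corollary \ref{ChM}. The only genuinely nontrivial ingredient is the appeal to Minkowski's theorem, which is what forces the inertia groups to generate all of $G$; everything else is bookkeeping with the Hilbert ramification sequence.
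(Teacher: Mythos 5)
Your argument is correct and follows essentially the same route as the paper's own proof: Proposition \ref{P0}(3) together with Minkowski's theorem forces the inertia subgroups to generate all of $\Gal(K/\Q)$, while the unramifiedness of $K/k$ forces each inertia group to meet $\Gal(K/k)$ trivially and hence to be generated by an involution outside that subgroup. You simply spell out the index-$2$ bookkeeping that the paper leaves implicit.
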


\begin{proof}
Since $\Q$ does not have nontrivial unramified extensions,
the group $T$ generated by all inertia subgroups must fix
$\Q$ (by Prop. \ref{P0}, part. 3.); this shows that 
$T = \Gal(K/\Q)$. Since $K/k$ is unramified, we have
$T \cap \Gal(K/K) = \{1\}$.  
\end{proof}

\begin{cor}\label{C1}
If $K/\Q$ is an $H_8$-extension of $\Q$, then there exists a prime
with ramification index $4$. In particular, $K$ is ramified
over every quadratic subfield of $K/\Q$. 
\end{cor}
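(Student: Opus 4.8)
The plan is to exploit two facts: the very rigid subgroup structure of the quaternion group $H_8$, and the fact (from Proposition \ref{P0}(3)) that over $\Q$ the inertia groups must generate the whole Galois group. Write $G = \Gal(K/\Q) \cong H_8 = \{\pm1,\pm i,\pm j,\pm k\}$. The key group-theoretic observations are that $-1$ is the \emph{only} element of order $2$ in $H_8$, that the center $\{\pm1\}$ is therefore the unique subgroup of order $\le 2$, and that consequently $\{\pm1\}$ is contained in every nontrivial subgroup of $G$ --- in particular in each of the three maximal subgroups $\langle i\rangle,\langle j\rangle,\langle k\rangle$ of order $4$, which are exactly the groups $\Gal(K/k)$ for the three quadratic subfields $k$ of $K$.

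First I would establish the existence of a prime with large inertia. Since $\Q$ admits no nontrivial unramified extension, Proposition \ref{P0}(3) (applied with $F = \Q$, as in the proof of Corollary \ref{ChM}) shows that the subgroup generated by all inertia groups $T_\fP(K/\Q)$ is all of $G$. If every inertia group had order $\le 2$ it would be contained in the center $\{\pm1\}$, and these could never generate the nonabelian group $G$; hence some prime $\fP$ has inertia group $T = T_\fP(K/\Q)$ of order $\ge 4$. Thus the rational prime $\p = \fP\cap\Z$ has ramification index $e = |T|$ divisible by $4$ in $K/\Q$, which is the first assertion.

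For the ``in particular'' I would deduce ramification over each quadratic subfield directly from this prime. Let $k$ be any of the three quadratic subfields and $H = \Gal(K/k)$ the corresponding order-$4$ subgroup. The inertia group of $\fP$ in the subextension $K/k$ is $T_\fP(K/k) = T \cap H$; since $T$ (of order $\ge 4$) and $H$ are both nontrivial, each contains the center, so $T\cap H \supseteq \{\pm1\} \ne \{1\}$. Hence $\fP$ ramifies in $K/k$, and as $k$ was arbitrary, $K$ ramifies over all three of its quadratic subfields. Alternatively, and perhaps more cleanly, one can bypass the inertia computation and argue by contradiction with Corollary \ref{ChM}: were $K/k$ unramified, $G$ would be generated by elements of order $2$ outside $H$, but the only involution $-1$ lies in $H$.

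The argument is essentially forced once the two inputs are in place, so there is no serious obstacle; the only delicate point is cosmetic. The generation argument yields $4 \mid e$, i.e. $e \in \{4,8\}$, rather than $e = 4$ on the nose. For odd $p$ the inertia group is cyclic and $H_8$ has no cyclic subgroup of order $8$, forcing $e = 4$; only a totally (wildly) ramified prime above $2$ could a priori give $e = 8$. Since an inertia group of order $8$ still contains the center, this possibility does not affect the conclusion about quadratic subfields, so I would either absorb it into the statement (reading ``ramification index $4$'' as ``divisible by $4$'') or simply note that the case $e = 8$ is harmless here.
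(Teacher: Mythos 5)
Your proposal is correct and follows essentially the same route as the paper: both arguments use that the inertia groups generate $\Gal(K/\Q)$ while $H_8$ has only the central involution $-1$ as an element of order $2$, forcing some inertia group of order divisible by $4$, and then observe that such an inertia group meets every index-$2$ subgroup nontrivially. Your closing remark about $e=8$ versus $e=4$ matches the paper's own phrasing ("order divisible by $4$") and does not affect the conclusion.
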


\begin{proof}
Since $H_8$ cannot be generated by elements of order $2$, there
must be some inertia group $T_\fP$ of order divisible by $4$. 
Then $\fP$ is completely ramified over its (quadratic) inertia
subfield; in particular, $\fP$ ramifies in $K/k$, where $k$
is the biquadratic subfield of $K/\Q$. 
\end{proof}

\begin{cor}\label{C2}
Let $K/F$ be a cyclic quartic extension with quadratic
subfield $k$. Then every prime ramifying in $k/F$ also
ramifies in $K/k$. 
\end{cor}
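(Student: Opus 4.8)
The plan is to fix a prime $\fP$ of $\OO_K$ lying above the ramifying prime $\p$ of $\OO_F$, put $\p_k = \fP \cap \OO_k$, and study the inertia group $T = T_\fP(K/F)$ inside $G = \Gal(K/F) \isom \Z/4\Z$. The point is that a cyclic group of order $4$ has a \emph{single} chain of subgroups $\{1\} \subset \Gal(K/k) \subset G$, with $\Gal(K/k)$ the unique subgroup of order $2$; so once the arithmetic hypothesis is converted into a statement about $T$, the conclusion will drop out of this subgroup lattice. The essential content is therefore group-theoretic, exactly as in the proof of Corollary \ref{C1}.

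First I would translate the hypothesis via the inertia field. Since the inertia field $K^T$ (the fixed field of $T$) is the largest intermediate field of $K/F$ in which $\p$ stays unramified, the assumption that $\p$ ramifies in $k/F$ says precisely that $k \not\subseteq K^T$, i.e. that $T \not\subseteq \Gal(K/k)$ (cf. Proposition \ref{P0}). Because the only subgroup of $G$ not contained in the order-$2$ subgroup $\Gal(K/k)$ is $G$ itself, this forces $T = G$; equivalently, $\fP$ is totally ramified in $K/F$ with $e(\fP/\p) = |T| = 4$.

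It then remains to descend to $K/k$. Here I would invoke multiplicativity of ramification indices in the tower $F \subseteq k \subseteq K$: since $\p$ ramifies in the quadratic extension $k/F$ we have $e(\p_k/\p) = 2$, whence $e(\fP/\p_k) = e(\fP/\p)/e(\p_k/\p) = 4/2 = 2 > 1$, so $\p_k$ ramifies in $K/k$. (Equivalently, $T_\fP(K/k) = T \cap \Gal(K/k) = \Gal(K/k)$ is nontrivial.)

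The main obstacle is the middle step, and it is exactly where the hypothesis of \emph{cyclicity} is indispensable. If $K/F$ were instead a biquadratic (Klein four) extension, the inertia group could be one of the two order-$2$ subgroups other than $\Gal(K/k)$; such a $T$ is not contained in $\Gal(K/k)$ --- so $\p$ would still ramify in $k/F$ --- yet $T \cap \Gal(K/k) = \{1\}$, so $\p_k$ would be unramified in $K/k$. Thus the statement genuinely uses that the subgroup lattice of $G$ is a chain, and I would make sure this is the hinge of the written proof.
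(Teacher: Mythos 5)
Your proof is correct and follows essentially the same route as the paper: the paper's one-line argument likewise observes that the inertia field of $\p$ cannot contain $k$, and that cyclicity of prime-power degree forces it to be $F$ itself, whence $\p$ is totally ramified in $K/F$ and in particular ramifies in $K/k$. You have simply spelled out the subgroup-lattice and multiplicativity details that the paper leaves implicit.
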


\begin{proof}
Since $\p$ ramifies in $k/F$ and $K/F$ is cyclic of 
prime power degree, $F$ must be the inertia subfield of 
$\p$ in $K/F$. 
\end{proof}

The following result is well known (\cite{Cohn}, 14.33):

\begin{prop}
Let $k$ be a quadratic number field with discriminant $d$. Then
$d$ can be written uniquely as a product of prime discriminants.
\end{prop}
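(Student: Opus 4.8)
The plan is to first make explicit which fundamental discriminants count as \emph{prime discriminants}, and then to establish existence and uniqueness separately, isolating the only delicate point, which is the bookkeeping at the prime $2$.

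First I would recall that the prime discriminants are exactly the fundamental discriminants divisible by a single rational prime: for an odd prime $p$ this is $p^\ast = (-1)^{(p-1)/2}p$, which satisfies $p^\ast \equiv 1 \pmod 4$, and for $p = 2$ they are $-4$, $8$ and $-8$. I would also recall the defining alternatives for a fundamental discriminant $d$: either $d \equiv 1 \pmod 4$ with $d$ squarefree, or $d = 4m$ with $m$ squarefree and $m \equiv 2,3 \pmod 4$.

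For existence I would peel off the odd part. For each odd prime $p \mid d$ the fact that $d$ is a fundamental discriminant forces $p \parallel d$, so I set $D = \prod_{p \mid d,\, p\ \mathrm{odd}} p^\ast$. As a product of pairwise coprime factors each $\equiv 1 \pmod 4$, the integer $D$ is again $\equiv 1 \pmod 4$, is odd, and has the same odd part as $d$ up to sign; hence $q = d/D$ is $\pm$ a power of $2$. A short case analysis using the two congruence conditions above then shows $q \in \{1, -4, 8, -8\}$: when $d$ is odd one gets $q \equiv 1 \pmod 4$ and $q = \pm 1$, forcing $q = 1$; when $d = 4m$ with $m \equiv 3 \pmod 4$ one gets $q = -4$; and when $m \equiv 2 \pmod 4$ one gets $q \in \{8,-8\}$ according to sign. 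Appending the corresponding even prime discriminant (or nothing, when $q = 1$) yields the desired factorization.

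For uniqueness I would use that distinct prime discriminants are supported at distinct primes and that their $2$-adic valuations force at most one even factor. Given any factorization $d = \prod_i e_i$ into prime discriminants, each odd prime $p \mid d$ lies in exactly one $e_i$, which must then equal $p^\ast$; these factors are thereby forced and account for the entire odd part $D$. What remains is $d/D$, a single signed integer, which can coincide with at most one of $1, -4, 8, -8$, so the even factor is pinned down as well. The main obstacle is precisely this analysis at $p = 2$: one must check that the four values $\{1, -4, 8, -8\}$ are exactly those compatible with $d$ being a fundamental discriminant, and that the valuation bound $v_2(d) \le 3$ rules out two even prime discriminants occurring together. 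Everything away from $2$ is immediate from the coprimality of the $p^\ast$ and from $p^\ast \equiv 1 \pmod 4$. (Equivalently, the statement is the translation, under the correspondence between fundamental discriminants and primitive real Dirichlet characters, of the factorization of such a character into characters of prime-power conductor via the Chinese Remainder Theorem; but the elementary congruence argument stays closer to the spirit of this paper.)
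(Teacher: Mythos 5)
Your argument is correct and complete. Note, however, that the paper does not actually prove this proposition: it is stated as well known with a reference to \cite{Cohn}, 14.33, so there is no in-text proof to compare against. Your elementary treatment --- peeling off the odd part as $D=\prod p^*$ with each $p^*\equiv 1\bmod 4$, showing the cofactor $d/D$ lies in $\{1,-4,8,-8\}$ by the two congruence alternatives for a fundamental discriminant, and then forcing uniqueness via $v_p(d)\le 1$ for odd $p$ and $v_2(d)\le 3$ --- is the standard proof and is exactly in the spirit of the paper's stated goal of being self-contained, so it is a genuine improvement over the bare citation. One small wording caveat: the phrase ``distinct prime discriminants are supported at distinct primes'' is literally false for the three even ones ($-4$, $8$, $-8$ are all supported at $2$), but your subsequent $2$-adic valuation bound is precisely what repairs this, so the proof as a whole stands.
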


Finally, the next proposition contains all the genus theory
we will need:

\begin{prop}\label{P3}
Let $k$ be a quadratic number field and suppose that $K/k$
is an unramified quadratic extension. Then $K/\Q$ is normal,
and $\Gal(K/k) \simeq (2,2)$.
\end{prop}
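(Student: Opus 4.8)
The plan is to first prove that $K/\Q$ is normal, and then to read off the group structure directly from Corollary~\ref{ChM}. Write $\Gal(k/\Q) = \la\sigma\ra$ and extend $\sigma$ to an automorphism of $\Qbar$; since $k/\Q$ is normal, $\sigma$ fixes $k$ setwise, so every $\Q$-conjugate of $K$ equals either $K$ or $K^\sigma$ (because $K/k$ and $K^\sigma/k$ are quadratic, hence normal). The normal closure of $K/\Q$ is therefore $M := K K^\sigma$. As $\sigma$ preserves ramification, $K^\sigma/k$ is unramified along with $K/k$, and since a compositum of extensions unramified over $k$ is again unramified, $\disc(M/k) = (1)$. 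Thus $M/\Q$ is normal and $M/k$ is unramified, which is exactly the setup in which Corollary~\ref{ChM} applies to $M$.

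Next I would argue by contradiction that $K^\sigma \ne K$ is impossible. In that case $M/k$ is a $(2,2)$-extension, so $G := \Gal(M/\Q)$ has order $8$ and contains the normal index-$2$ subgroup $U := \Gal(M/k) \simeq (2,2)$. Here $K/\Q$ is \emph{not} normal, so $G$ cannot be abelian (in an abelian Galois group every intermediate field, in particular $K$, would be normal over $\Q$). Among the two nonabelian groups of order $8$, only $D_4$ contains a normal four-group, since every subgroup of order $4$ in $Q_8$ is cyclic; hence $G \simeq D_4$ and $U$ is one of its two Klein four subgroups. A short inspection shows that the involutions of $D_4$ lying outside such a $U$ are precisely the two reflections forming the \emph{other} four-group, so they generate only a group of order $4$, never all of $G$. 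This contradicts Corollary~\ref{ChM}, forcing $K^\sigma = K$, i.e. $K/\Q$ is normal.

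With normality established, $G = \Gal(K/\Q)$ has order $4$ and contains $\Gal(K/k)$ as an index-$2$ subgroup, and Corollary~\ref{ChM} says $G$ is generated by involutions lying outside $\Gal(K/k)$. The cyclic group of order $4$ fails this, as its unique involution sits inside its index-$2$ subgroup; hence $G \simeq (2,2)$, as claimed. The only genuinely delicate point is the order-$8$ step: the enumeration of groups is routine, but its force comes entirely from the verification that $D_4$ is not generated by the involutions outside a normal four-group, which is precisely what Corollary~\ref{ChM} forbids. I expect the remaining care to go into confirming the ramification bookkeeping for $M$, namely that both $\sigma$-conjugation and formation of the compositum preserve triviality of the relative discriminant over $k$.
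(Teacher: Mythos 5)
Your proof is correct, but the decisive step is genuinely different from the paper's. Both arguments open the same way: form the normal closure $N = KK^\sigma$, note that it is unramified over $k$, and reduce the non-normal case to $\Gal(N/\Q) \simeq D_4$ containing $\Gal(N/k)$ as a Klein four-subgroup. From there the paper does \emph{not} use Corollary \ref{ChM}; it descends into the subfield lattice of the $D_4$-extension, taking the quadratic field $F$ over which $N$ is cyclic quartic, invoking Corollary \ref{C2} to show that the biquadratic field fixed by the centre is unramified over both $k$ and $F$, and then reaching a contradiction by locating the inertia field of a prime ramifying in the third quadratic subfield $\widetilde{k}$. You instead apply Corollary \ref{ChM} directly to $N/\Q$ and observe that the involutions of $D_4$ outside a Klein four-subgroup are the two reflections of the \emph{other} four-group and hence generate only a group of order $4$ (with $D_4 = \la r,s\ra$ and $U = \la r^2, s\ra$, the outside involutions $rs$ and $r^3s$ satisfy $(rs)(r^3s) = r^2$, so they generate $\la r^2, rs\ra$); this is correct and runs exactly parallel to the paper's proof of Corollary \ref{C1}. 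Your route is shorter, trades the ramification bookkeeping in the biquadratic subfield for a one-line computation with involutions, and has the added merit of explicitly ruling out $\Gal(K/\Q) \simeq C_4$ via the same corollary --- a step the paper's written proof omits entirely (its argument as printed only establishes normality, and the conclusion $\Gal(K/k)\simeq(2,2)$ must in any case be read as $\Gal(K/\Q)\simeq(2,2)$, as you correctly do). Both proofs are complete; the only polish yours needs is to display the generation computation in $D_4$ rather than leaving it to ``a short inspection.''
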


\begin{proof}
If $K/\Q$ is not normal, let $\sigma$ be the nontrivial
automorphism of $k/\Q$. Then $N = KK^\sigma$ is the normal
closure of $K/\Q$, and $\Gal(N/\Q)$ is a nonabelian group of
order $8$ with a subgroup $\Gal(N/k)$ of type $(2,2)$.
The only such group is the dihedral group of order $8$.
Since $K/k$ is unramified, so are $K^\sigma/k$ and $N/k$. 
Let $F$ be the quadratic subfield of $N/\Q$ over which
$N$ is cyclic, and let $M$ be its quadratic subextension. 
Since $N/M$ is unramified, so is $M/F$ by Cor. \ref{C2}.
But then $M$ is unramified over $k$ and $F$.
Since $M/\Q$ is bicyclic, it contains three quadratic
subfields, $k$, $F$, and $\widetilde{k}$, say.
Let $p$ be any prime ramified in $\widetilde{k}/\Q$.
Since $M/k$ is unramified, $p$ has inertia degree $2$ in
$M/\Q$, hence its inertia subfield in $M$ is $k$ or $F$. 
But this contradicts the fact that $M/k$ and $M/F$ are unramified.
\end{proof}

\section{Construction of $H_8$-Extensions}
Now let $M/k$ be an unramified normal extension of $k$ with 
Galois group 
$$\Gal(M/k) = H_8 = \la \sigma, \tau: \sigma^2 = 
	\tau^2 = -1, \sigma\tau = -\tau\sigma\ra,$$ 
the quaternion group of order $8$ (observe that $-1$ denotes a central 
involution, i.e. an automorphism of order $2$ contained in the center 
of the group). We will also assume that $M$ is normal over $\Q$.

Our first claim is that $\Gamma = \Gal(M/\Q) \simeq D_4 \curlyvee C_4$.
In fact, let $K$ be the quartic subextension of $M/k$; then
$K/k$ is an elementary abelian unramified extension of $k$,
hence contained in the genus class field of $k$.
In particular we see that $\Gal(K/\Q) \simeq (2,2,2)$; therefore
$\Gamma$ is a group of order $16$ with a subgroup of type
$H_8$ and a factor group of type $(2,2,2)$. There are only
two such groups (see \cite{HS} or \cite{TW}), 
i.e. $C_2 \times H_8$ and $D_4 \curlyvee C_4$.
But $C_2 \times H_8$ cannot be generated by elements
of order $2$, therefore we must have  $\Gamma \simeq D_4 \curlyvee C_4$.

Now we put 
$\Gamma = \la \rho, \sigma, \tau:
	\rho^2 = \sigma^2 = \tau^2 = -1, [\rho,\sigma] = [\rho,\tau] = 1,
	\sigma\tau = -\tau\sigma\ra.$
$\Gamma$ has seven subgroups of order $8$; three of them
(those containing $\rho$) have type $(2,4)$, three are
dihedral groups (namely $\Delta_1 = \la \sigma, \rho\tau\ra$,
$\Delta_2 = \la \tau, \rho\sigma \ra$, and 
$\Delta_3 = \la \rho\sigma, \rho\tau\ra$), and one of them is
the quaternion subgroup $\la \sigma, \tau\ra$ fixing the
field $k$ (see Table 1). 

\begin{table}[hb]
\caption{}
\begin{center} 
\begin{tabular}{|c|ccl|c|} \hline
\rsp $G$  & & & fixed field of  $G$ \quad  & $G \simeq $ \\ \hline
\rsp $\la -1, \rho\sigma\tau \ra$ & \quad $K_{12} $ & $=$ & 
	$\Q(\sqrt{d_1}, \sqrt{d_2}\,)$ \quad  & $(2,2)$ \\
\rsp $\la -1, \rho\sigma \ra$ & \quad $K_{23} $ & $=$ & 
	$\Q(\sqrt{d_2}, \sqrt{d_3}\,)$ \quad  & $(2,2)$ \\
\rsp $\la -1, \rho\tau \ra$ 	& \quad $K_{13} $ & $=$ & 
	$\Q(\sqrt{d_1}, \sqrt{d_3}\,)$ \quad  & $(2,2)$ \\
\rsp $ \la \rho \ra$	& \quad $L_0 $ & $=$ & 
	$\Q(\sqrt{d_1d_2}, \sqrt{d_1d_3}\,)$ \quad & $(4)$ \\
\rsp $\la \sigma \ra$ & \quad $L_1 $ & $=$ & 
	$\Q(\sqrt{d_1}, \sqrt{d_2d_3}\,)$ \quad & $(4)$ \\
\rsp $\la \tau \ra$ & \quad $L_2 $ & $=$ & 
	$\Q(\sqrt{d_2}, \sqrt{d_1d_3}\,)$ \quad & $(4)$ \\
\rsp $\la \sigma\tau \ra$ & \quad $L_3 $ & $=$ & 
	$\Q(\sqrt{d_3}, \sqrt{d_1d_2}\,)$ \quad & $(4)$ \\
	  \hline
\rsp $ \la \sigma, \tau \ra$ & \quad $k $ & $=$ & 
	$\Q(\sqrt{d}\,)$  & $H_8$ \\
\rsp $\la \rho, \sigma\tau \ra$ & \quad  $k_{12} $ & $=$ & 
	$\Q(\sqrt{d_1d_2}\,)$     \quad  & $(2,4)$ \\
\rsp $\la \rho, \sigma \ra$ & \quad  $k_{23} $ & $=$ & 
	$\Q(\sqrt{d_2d_3}\,)$ 	\quad  & $(2,4)$ \\
\rsp $\la \rho, \tau \ra$ & \quad  $k_{13} $ & $=$ & 
	$\Q(\sqrt{d_1d_3}\,)$ 	\quad  & $(2,4)$ \\
\rsp $\la \sigma,\rho\tau \ra$ &\quad  
	$k_1 $ & $=$ & $\Q(\sqrt{d_1}\,)$ 	\quad  & $D_4$ \\
\rsp $\la \tau,\rho\sigma \ra$ &\quad  
	$k_2 $ & $=$ & $\Q(\sqrt{d_2}\,)$ 	\quad  & $D_4$ \\
\rsp $\la \rho\sigma,\rho\tau \ra$ & \quad  
	$k_3 $ & $=$ & $\Q(\sqrt{d_3}\,)$ 	\quad & $D_4$ \\ 
		\hline 
\rsp $\la \rho, \sigma, \tau \ra $ 
		&  \quad  & & $\Q$  \quad & $D_4 \curlyvee C_4$ \\
\hline
\end{tabular}
\end{center}
\end{table}

The fixed field of $\Delta_j$ ($1 \le j \le 3$) is a quadratic
number field $k_j$ with discriminant $d_j$. We claim that the
$d_j$ are relatively prime. In fact, assume that $\p$
is a prime ideal which ramifies in at least two of
the three fields $k_j = \Q(\sqrt{d_j}\,)$, say in $k_1$ and $k_2$; 
let $T_\p(M/\Q)$ denote the inertia subgroup of $\Gamma$. Then 
$T_\p(M/\Q)$ has the following properties:
\begin{enumerate}
\item  $T_\p(M/\Q)$ has order $2$: this follows from $M/k$
	being unramified;
\item $T_\p(M/\Q) \cap \Delta_1 = T_\p(M/\Q) \cap \Delta_2 = \{ 1 \}$,
	because $\p$ cannot ramify in $M/k_j$ ($1 \le j \le 2$).
\end{enumerate}
But now we see that $\Delta_1 \cup \Delta_2$ contain all seven elements
of order $2$, hence at least one of them must contain the
element of order $2$ which generates $T_\p(M/\Q)$.
The same argument applied to an infinite prime yields that
at most one of the discriminants $d_j$ can be negative.

Now $K$ contains the three quadratic subfields $k_j$; from
degree considerations it is clear that we must have
$K = \Q(\sqrt{d_1}, \sqrt{d_2}, \sqrt{d_3}\,)$. 
In particular, $K$ contains the quadratic number field with 
discriminant $d' = d_1d_2d_3$; we claim that $d = d'$. Since 
$K \subseteq k_\gen$, we see that $d_1d_2d_3$ divides $d$ 
(otherwise $K/k$ would ramify). On the other hand,
$\Q(\sqrt{d}\,) \subset K$ shows that 
$d$ is the product of some of the $d_i$; therefore we must
have equality $d = d_1d_2d_3$.

So far we have seen: if there is an unramified quaternion extension
$M/k$, then $d = \disc k = d_1d_2d_3$ is the product of three
relatively prime discriminants $d_j$, at most one of which is
negative. 

Next we will study the decomposition of primes in $M/\Q$. 
To this end, consider a prime $p\mid d_1$; then $p \OO_1= \p^2$
for some prime ideal $\p$ in the ring of integers $\OO_1$ of
$k_1$. Let $F$ be the fixed field of $\la \sigma \ra$; then
$F$ contains the fixed field $k$ of $\la \sigma, \tau \ra$, 
the fixed field $k_1$ of $\la \sigma, \rho\tau \ra $ and
the fixed field $k_{23} = \Q(\sqrt{d_2d_3}\,)$ of 
$ \la \sigma, \rho \ra$. We claim that $p$ splits in $k_{23}/\Q$.
We already know that the inertia subgroup $T = T_p(M/\Q)$ has order
$2$; since the prime ideal $\p$ above $p$ in $k_1$ does
not ramify in $M/k_1$, we must have $T \cap \Delta_1 = \{ 1 \}$. 
Enumerating the subgroups of order $2$ in $\Gamma$ shows that 
there are only the possibilities $T = \la \rho\sigma \ra$
and $T = \la -\rho\sigma \ra$. But now the normaliser of $T$
in $\Gamma$ equals $N_\Gamma(T) = \la \rho, \sigma\ra$. Since
$T$ is a normal subgroup of the decomposition group $Z = Z_p(M/\Q)$
of $p$, we conclude that $Z \subseteq \la \rho, \sigma\ra$. 
But this means that the fixed field $k_{23}$ of $\la \rho, \sigma\ra$
is contained in the decomposition field for $p$, i.e. 
$p$ splits in $k_{23}/\Q$, and we have $(d_2d_3/p_1) = +1$
for all primes $p_1$ dividing $d_1$. By symmetry 
we conclude

\begin{prop}
Let $k$ be a quadratic number field with discriminant
$d$. If there exists an unramified extension $M/k$ with
$\Gal(M/k) \simeq H_8$ and which is normal over $\Q$,
then 
\begin{enumerate}
\item $\Gal(M/\Q) \simeq D_4 \curlyvee C_4$;
\item there is a factorization $d = d_1d_2d_3$ of $d$ into
	three discriminants $d_1, d_2, d_3$, at most one of 
	which is negative;
\item for all primes $p_i \mid d_i$ we have
	$ (d_1d_2/p_3) = (d_2d_3/p_1) = (d_3d_1/p_2) = +1.$ 
\end{enumerate}
\end{prop}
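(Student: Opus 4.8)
The three assertions are facets of a single structural analysis of $\Gamma = \Gal(M/\Q)$, so the plan is to pin down $\Gamma$ first and then read (2) and (3) off the lattice of its subgroups together with Hilbert's ramification theory (Prop. \ref{P0}). For (1), I would start from the quartic subfield $K$ of $M/k$ fixed by the center $\la -1\ra$ of $H_8$. Since $M/k$ is unramified, so is $K/k$; hence $K$ lies in the genus field $k_\gen$ and $\Gal(K/\Q) \simeq (2,2,2)$. Thus $\Gamma$ is a group of order $16$ carrying a normal subgroup $\Gal(M/k) \simeq H_8$ and a quotient of type $(2,2,2)$, and only two such groups exist, $C_2 \times H_8$ and $D_4 \curlyvee C_4$. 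Because $M/k$ is unramified and $M/\Q$ normal, Chebotarev's monodromy theorem (Cor. \ref{ChM}) forces $\Gamma$ to be generated by involutions; since the involutions of $C_2 \times H_8$ generate only a subgroup of order $4$, this rules it out, leaving $\Gamma \simeq D_4 \curlyvee C_4$.

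Next, with $\Gamma$ presented as above, I would single out the three dihedral index-$2$ subgroups $\Delta_1,\Delta_2,\Delta_3$ and their quadratic fixed fields $k_j$ of discriminant $d_j$. The linchpin for (2) is that every inertia group $T_p = T_p(M/\Q)$ has order at most $2$: because $M/k$ is unramified, $T_p \cap \Gal(M/k) = \{1\}$, and $\Gal(M/k)$ has index $2$ in $\Gamma$. So $T_p$ is generated by an involution whenever $p$ ramifies. If some prime $p$ ramified in two of the $k_j$, say $k_1$ and $k_2$, then $T_p$ would meet neither $\Delta_1$ nor $\Delta_2$; but the seven involutions of $\Gamma$ all lie in $\Delta_1 \cup \Delta_2$, a contradiction, so the $d_j$ are pairwise coprime. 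The same argument at the archimedean place, with complex conjugation playing the role of inertia, gives that at most one $d_j$ is negative. Finally $d = d_1 d_2 d_3$: the field $K = \Q(\sqrt{d_1}, \sqrt{d_2}, \sqrt{d_3}\,)$ lies in $k_\gen$, forcing $d_1 d_2 d_3 \mid d$, while $\Q(\sqrt d\,) \subset K$ forces $d$ to be a subproduct of the $d_j$; together these yield equality.

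For (3) I would fix a prime $p \mid d_1$ and exploit the tower $\Q \subset k_1 \subset M$. Multiplicativity of ramification indices, together with $|T_p| = 2$ and $e(p, k_1/\Q) = 2$, shows the prime of $k_1$ above $p$ is unramified in $M/k_1$, i.e. $T_p \cap \Delta_1 = \{1\}$. Enumerating the involutions lying outside $\Delta_1$ then pins $T_p = \la \rho\sigma\ra$ or $\la -\rho\sigma\ra$. Since $T_p$ is normal in the decomposition group $Z_p$, we get $Z_p \subseteq N_\Gamma(T_p) = \la \rho, \sigma\ra = \Gal(M/k_{23})$, whence by Prop. \ref{P0}(1) the prime $p$ splits completely in $k_{23} = \Q(\sqrt{d_2 d_3}\,)$; that is, $(d_2 d_3/p) = +1$. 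Permuting the indices gives the remaining two congruences.

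The main obstacle is not conceptual but the group-theoretic bookkeeping underlying every step: verifying that $\Gamma$ has exactly seven involutions, that they are distributed so that any two of the $\Delta_j$ already exhaust them, and computing the normalizer $N_\Gamma(T_p)$. The one genuinely delicate point is the correct translation between ``ramifies'' and containment of inertia subgroups—specifically the multiplicativity argument that turns $e(p, k_1/\Q) = 2$ and $|T_p| = 2$ into $T_p \cap \Delta_1 = \{1\}$—where an index-versus-order slip is easy to make.
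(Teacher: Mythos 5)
Your proposal follows the paper's own proof essentially step for step: part (1) via the genus-field argument identifying $\Gal(K/\Q)\simeq(2,2,2)$, the two candidate groups of order $16$, and Chebotarev's monodromy theorem; part (2) via $|T_p|=2$, the fact that $\Delta_1\cup\Delta_2$ exhausts the seven involutions, the archimedean analogue, and the divisibility argument $d_1d_2d_3\mid d\mid d_1d_2d_3$; part (3) via $T_p=\la\pm\rho\sigma\ra$, the normalizer computation $N_\Gamma(T_p)=\la\rho,\sigma\ra$, and $Z_p\subseteq\Gal(M/k_{23})$. It is correct and takes the same route, including the delicate ramification-index bookkeeping you flag at the end.
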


\begin{rem}
We will call a factorization $d = d_1d_2d_3$ of $d$
into discriminants an $H_8$-factorization, if the condition
$ (d_1d_2/p_3) = (d_2d_3/p_1) = (d_3d_1/p_2) = +1$ is
satisfied for all $p_i\mid d_i$. It is an easy exercise to show
that the quadratic reciprocity law implies that at most
one of the $d_i$ is negative.
\end{rem}

Our next task is the explicit construction of the
unramified $H_8$-extension $M/k$. To this end, assume that we
have already found it, and let $\Gamma = \Gal(M/\Q)$ be
as above. Then $K_{13} = \Q(\sqrt{d_1}, \sqrt{d_3}\,)$ is the fixed
field of $\la -1, \rho \tau \ra$ (we will write 
$K_{13} \corr \la -1, \rho \tau \ra$); therefore $M/K_{13}$ is an
extension of type $(2,2)$ with subfields
$K_{13}(\sqrt{d_2}\,) \corr \la -1 \ra$, 
$N = K_{13}(\sqrt{\mu}\,) \corr \la \rho\tau \ra$ and 
$N' = K_{13}(\sqrt{\nu}\,) \corr \la -\rho\tau \ra$ for 
some $\mu, \nu \in K_{13}$. Now $\sigma$ and $\rho$ act on 
$\la -1, \rho \tau \ra$, $\sigma$ is trivial on $\Q(\sqrt{d_1}\,)$
and $\rho$ on $\Q(\sqrt{d_3}\,)$. Since $(\rho\tau)^\sigma
= -\rho\tau$ and $(\rho\tau)^\rho = \rho\tau$, we see that
$N^\sigma = N'$, $N^\rho = N$ and $N^{\rho\sigma} = N'$.
In particular we can choose $\nu = \mu^\sigma$.

Of course, any $\mu \in L$ can be written in the form
$\mu = x + y\sqrt{d_1} + z\sqrt{d_3} + w\sqrt{d_1d_3}$. 
But for the construction of the class fields it would be 
preferable if $ ´\mu$ could be factorized into elements 
coming from subfields of $K_{13}$. Let us therefore put
$\mu = (x_1+y_1\sqrt{d_1}\,)(x_3+y_3\sqrt{d_3}\,)
	(x\sqrt{d_1}+y\sqrt{d_3}\,)$, where the coefficients
are rational. Since $K_{13}(\sqrt{\mu\mu^\sigma}\,) = K_{13}(\sqrt{d_2}\,)$,
we conclude that  $\mu\mu^\sigma \eq d_2$ (the symbol $\eq$ 
indicates that the two sides differ only by a square in $K_{13}$).
We find:
\begin{align*} 
\mu^{1+\sigma} &= 
	(x_1+y_1\sqrt{d_1}\,)^2(x_3^2-d_3y_3^2)(d_1x^2-d_3y^2)
	\eq d_2 \\
\mu^{1+\rho} &= 
	(x_1^2-d_1y_1^2)(x_3^2-d_3y_3^2)(x\sqrt{d_1} + y\sqrt{d_3}\,)^2
	\eq -1 \\
\mu^{1+\rho\sigma} &= 
	(x_1^2-d_1y_1^2)(x_3+y_3\sqrt{d_3}\,)^2(-d_1x^2+d_3y^2) 
	\eq -d_2 
\end{align*}
If we put $a:= x_1^2-d_1y_1^2$, then the second equation yields
$-a \eq x_3^2-d_3y_3^2$, and from the first equation we get
$d_1x^2 -d_3y^2 \eq -ad_2$. This suggests that in order to
construct $D_4 \curlyvee C_4$-extensions of $\Q$ we should 
try to solve the following system of diophantine equations over $\Z$:

\begin{align*}
d_1X_1^2 - d_2X_2^2 & = -ad_3X_3^2  & (I)   \\
 Y_1^2 - d_1 Y_2^2  & = aY_3^2 	    &  (II) \\
 Z_1^2 - d_2 Z_2^2  & = -aZ_3^2     &  (III)
\end{align*} 

The same system of equations was given by 
M.~Horie \cite{Hor}; moreover, the construction 
presented by Min\'ac and Smith \cite{MS} 
uses three equations which can easily be shown to be 
equivalent to (I)-(III). In the form of the
"common slot property" (cf. T.~Smith \cite{Sm}, Prop. 1.1.2)
of quaternion algebras it seems to have been well known to  
people familiar with Brauer groups. Now we claim

\begin{prop}\label{Pcon}
Let $k$ be a quadratic number field with discriminant $d$, 
and suppose that $d = d_1d_2d_3$ is an $H_8$-factorization.
Then there exists an odd squarefree $a \in \Z$ such that
the system (I) -- (III) of diophantine equations
has nontrivial solutions in $\Z$. If $x_i, y_i, z_i \in \Z$ form
a solution, put
$$ \mu \ = \ (x_1\sqrt{d_1}+x_2\sqrt{d_2}\,)
		(y_1+y_2\sqrt{d_1}\,)(z_1+z_2\sqrt{d_2}\,)/r,$$
where $r \in \Z$ is an arbitrary nonzero integer. Then
$M = \Q(\sqrt{d_1}, \sqrt{d_2}, \sqrt{d_3}, \sqrt{\mu}\,)$
is an $H_8$-extension of $k$ which is normal over $\Q$ with
$\Gal(M/\Q) \simeq D_4 \curlyvee C_4$. 
If we choose $r \in \Z$ in such a way that $\mu$ is integral 
and not divisible by any rational prime $p$, then there is a 
$2$-primary element in $\{\pm \mu \}$ if $d_1d_2 \equiv 0, 1 \bmod 8$,
and in $\{\mu, 2\mu\}$ if $d_1d_2 \equiv 4 \bmod 8$. 
\end{prop}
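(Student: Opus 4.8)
The plan is to establish the three assertions in turn: the existence of $a$ making (I)--(III) solvable, the identification of $\Gal(M/\Q)$, and the $2$-primary normalization. For the first I would translate solvability of each conic into Hilbert symbols by Hasse--Minkowski. Equation (II) has a nontrivial solution iff $a$ is a norm from $\Q(\sqrt{d_1}\,)$, i.e. $(d_1,a)_v=1$ for all places $v$; (III) iff $(d_2,-a)_v=1$ for all $v$; and (I), after multiplying through by $d_1$ so that the leading coefficient becomes a square, iff $(d_1d_2,-ad_1d_3)_v=1$ for all $v$. The useful observation is that, once the conditions from (II) and (III) are imposed, the symbol in (I) collapses at each place to $(d_1,d_2)_v(d_1,d_3)_v(d_2,d_3)_v$, which no longer involves $a$. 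So the matter splits into two tasks: (i) check this $a$-free product is $+1$ at every place, and (ii) produce an odd squarefree $a$ satisfying the local norm conditions coming from (II) and (III).

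Task (i) is where the $H_8$-factorization is used. At an odd prime $p\mid d_i$ the coprimality of the $d_j$ reduces the product to the Legendre symbol $(d_jd_k/p)$, which is $+1$ by hypothesis; at $\infty$ it is $+1$ because at most one $d_i$ is negative; and at all remaining odd primes the three symbols are trivial. Thus the product is $+1$ at every place except possibly $v=2$, and there I would obtain the value for free from Hilbert reciprocity: since $\prod_v(d_i,d_j)_v=1$ for each pair, the total product over all $v$ of the three symbols is $1$, so its value at $2$ equals the (already computed, trivial) product over the other places. For task (ii) the conditions from (II) and (III) are prescribed Hilbert symbols at the disjoint sets of primes dividing $d_1$ and dividing $d_2$, together with $2$ and $\infty$; I would realize them by taking $a$ to be a product of auxiliary primes chosen through Dirichlet's theorem, the consistency of the prescription being guaranteed by reciprocity and the sign restriction on the $d_i$. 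Producing this global $a$ and verifying compatibility of the local conditions at $2$ and $\infty$ is the main obstacle, and is the precise analogue of R\'edei's classical construction.

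For the Galois structure I would run the analysis preceding the proposition in reverse. With $\mu=ABC/r$, $A=x_1\sqrt{d_1}+x_2\sqrt{d_2}$, $B=y_1+y_2\sqrt{d_1}$, $C=z_1+z_2\sqrt{d_2}$, I would compute $\mu\cdot g(\mu)$ for each generator $g\in\{\rho,\sigma,\tau\}$ of $\Gal(K/\Q)$ acting on $K=\Q(\sqrt{d_1},\sqrt{d_2},\sqrt{d_3}\,)$. Using (I)--(III) to evaluate $A\sigma(A)=-ad_3x_3^2$, $B\bar B=ay_3^2$, $C\bar C=-az_3^2$, each product $\mu\,g(\mu)$ turns out to be a square $s_g^2$ in $K$; this shows at once that $\sqrt{g(\mu)}\in M$, so $M/\Q$ is normal, and it lets me read off $g^2$ on $\sqrt\mu$ from the sign of $g(s_g)/s_g$. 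One finds $\sigma^2=\tau^2=\rho^2=-1$, and the same bookkeeping applied to $\sigma(s_\tau)s_\tau/\bigl(s_\sigma\,\tau(s_\sigma)\bigr)$ yields $-1$, so that $\sigma\tau=-\tau\sigma$ while $\rho$ is central; comparison with the presentation of $\Gamma$ identifies $\Gal(M/\Q)\simeq D_4\curlyvee C_4$ and $\Gal(M/k)\simeq H_8$, once one checks $\sqrt\mu\notin K$ so that $[M:\Q]=16$. This part is entirely mechanical but demands careful sign tracking.

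Finally, for the $2$-primary claim I would take $r$ to be the rational content of $ABC$, so that $\mu$ becomes a primitive algebraic integer. The relations $\mu\cdot g(\mu)\in (K^*)^2$ together with primitivity and the explicit norms of $A,B,C$ confine the odd part of $(\mu)$ to primes dividing $d_1d_2d_3$ and force their exponents to be even, so the square class of $\mu$ is governed entirely at the prime $2$. What remains is a purely $2$-adic computation: reducing $\mu=ABC/r$ modulo a suitable power of $2$, using that $a$ is odd and the values of $d_1,d_2\bmod 8$, one checks directly that exactly one of $\pm\mu$ (respectively one of $\mu,2\mu$) is congruent to a square modulo $4$, the dichotomy being dictated by whether $d_1d_2\equiv 0,1$ or $4\bmod 8$, i.e. by the ramification of $2$ in $\Q(\sqrt{d_1d_2}\,)$. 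I expect no conceptual difficulty here beyond a careful case analysis modulo $8$.
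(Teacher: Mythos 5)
Your first two steps are essentially sound and close to the paper's. The Hilbert--symbol reformulation of (I)--(III), the observation that the local conditions for (I) become $a$-free once those for (II) and (III) are imposed, and the appeal to reciprocity at $v=2$ reproduce the paper's place-by-place verification in different language; the construction of $a$ itself is left sketchy but is the Dirichlet-plus-reciprocity argument the paper carries out via its conditions (1)--(4). For the Galois group, your direct verification of the relations $\sigma^2=\tau^2=\rho^2=-1$, $\sigma\tau=-\tau\sigma$, $\rho$ central from the quantities $s_g$ with $\mu\,g(\mu)=s_g^2$ is equivalent to the paper's route through Furtw\"angler's criterion (Lemma \ref{L1}) and the classification of the groups of order $8$ and $16$; either works.

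The genuine gap is in the $2$-primary claim. You reduce it to ``a purely $2$-adic computation: one checks directly that exactly one of $\pm\mu$ is congruent to a square modulo $4$,'' expecting only a case analysis modulo $8$. That step fails as stated. When $2$ is inert in a quadratic subfield of $K_{12}=\Q(\sqrt{d_1},\sqrt{d_2}\,)$ (e.g.\ $d_1\equiv 5\bmod 8$), the completion at a prime $\fQ\mid 2$ has a group of unit square classes of order at least $8$, in which the subgroup generated by $-1$ together with the $2$-primary classes is proper; hence there exist $2$-adic units $u$ for which \emph{neither} $u$ nor $-u$ is $2$-primary, and no congruence computation with $\mu$ alone, however careful, can decide the issue. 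The paper gets around this by using the global input that the norms of $\mu$ to the quadratic subfields are squares or $d_3$ times squares (hence $2$-primary), and then \emph{descending} $2$-primarity from the subfield to $K_{12}$ via the trace-surjectivity argument of Lemma \ref{L2} --- an idea absent from your outline. Worse, when $d_1d_2$ is even the prime $2$ ramifies in $K_{12}$ and the paper abandons local computation altogether: it constructs a second $H_8$-extension $N=K(\sqrt{\nu}\,)$ with $\nu$ in the \emph{odd} biquadratic subfield, and uses genus theory (Lemma \ref{L3} together with Prop.~\ref{P3}) to conclude $\nu\eq\delta\mu$ with $\delta$ a discriminant dividing $d$, normalized to $\delta\in\{\pm 1,\pm 2\}$; the dichotomy $\{\pm\mu\}$ versus $\{\mu,2\mu\}$ in the statement comes from this comparison and the residue of the odd discriminant modulo $8$, not from reducing $\mu$ modulo powers of $2$. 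You would need substitutes for both of these ingredients to complete the proof.
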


\subsection*{Existence of $a$}
We want to show that the diophantine equations (I)--(III) 
have nontrivial solutions if we choose $a \in \Z$ suitably. 
To this end we write $d_1 = \prod d_{1,i}$ and 
$d_2 = \prod d_{2,i}$ as products of prime discriminants.
We will assume without loss of generality that $d_2 > 0$ 
(otherwise we simply exchange $d_1$ and $d_2$ -- recall that
at most one of the $d_i$ is negative).
Then we claim that (I)--(III) are nontrivially solvable 
in $\Z$ if and only if the following conditions are satisfied:
\begin{enumerate}
\item[(1)] $a > 0$ if $d_1 < 0$;
\item[(2)] $(d_1/a) = (d_2/a) = +1$;
\item[(3)] $(d_{1,i}/a) = +1$ for all discriminants 
	$d_{1,i} \mid d_1$;
\item[(4)] $(d_{2,i}/a) = \sign(d_{2,i})$ for all discriminants 
	$d_{2,i} \mid d_2$.
\end{enumerate}

Before we prove this, let us show that such $a \in \Z$ 
actually exist. If $d_2$ is a sum of two squares, then we can 
obviously choose $a = 1$.  If not, then we can find an odd prime 
$a$ satisfying properties (1), (3) and (4) 
by making use of quadratic reciprocity and Dirichlet's theorem 
on primes in arithmetic progressions (here we use that $d_1$ 
and $d_2$ are relatively prime). We claim that all such primes 
satisfy condition (2) automatically. In fact this is obvious
for the condition on $d_1$; since $d_2$ was assumed to be 
positive, the number of negative $d_{2,i}$ is even, and this 
implies $1 = \prod (d_{2,i}/a) = (d_2/a)$.

Now consider the diophantine equation (I): $d_1X_1^2 - d_2X_2^2 =
-ad_3X_3^2$. We have to show that it has solutions in 
the reals, modulo $a$ and modulo every odd prime dividing $d$
(we can neglect the prime $2$ because of the product formula). 
Solvability in $\R$ is clear: if (II) is solvable in $\R$
then $d_1 <0$ clearly implies $a > 0$. Moreover, this 
condition suffices for the solvability in $\R$ of (I) and (II);
since $d_2 > 0$, (III) always has real solutions. 
If we reduce (I) modulo $a$, then we get
$d_1X_1^2 \equiv d_2X_2^2$; since $(d_1/a) = (d_2/a) = +1$, 
this equation is indeed solvable.
Now let $p$ be an odd prime dividing $d_1$; we get
$d_2X_2^2 \equiv ad_3X_3^2 \bmod p$. The condition
$(d_2/p) = (d_3/p)$ shows that solvability is equivalent to
$(a/p) = +1$. Quadratic reciprocity shows that this is
equivalent to $(p^*/a) = +1$, where $p^* = (-1)^{(p-1)/2}p$
is the unique prime discriminant divisible by $p$;
now condition (3) guarantees solvability.
Next assume that $p \mid d_2$; then we have to solve 
$d_1X_1^2 \equiv -ad_3X_3^2 \bmod p$. Again, the condition
$(d_1/p) = (d_3/p)$ reduces this to a proof of $(-a/p) = +1$.
If $p \equiv 1 \bmod 4$, this is equivalent to $(p/a) = +1$,
which holds because of (4). If  $p \equiv 3 \bmod 4$,
then $(-a/p) = +1 \iff (a/p) = -1 \iff (p^*/a) = -1$, and
again this is true by (4). 
Finally let $p \mid d_3$; then $d_1X_1^2 \equiv d_2X_2^2  \bmod p$
is clearly solvable.
The equations (II) and (III) can be treated similarly.

\subsection*{Computation of the Galois Group}

Let $K= \Q(\sqrt{d_1}, \sqrt{d_2}, \sqrt{d_3}\,)$; then $K/k$
is unramified and we have $\Gal(K/k) \simeq (2,2)$. If
$\alpha, \beta \in K^\times$ satisfy an equation $\alpha = \beta \xi^2$
for some $\xi \in K^\times$, then we write $\alpha \eq \beta$.
For the computation of $\Gal(M/k)$ we need a result which
was stated without proof by Furtw\"angler \cite{Fw}:

\begin{lemma}\label{L1}
Let $K/F$ be a quartic extension with $\Gal(K/F) \simeq (2,2)$;
let $\sigma, \tau$ and $\sigma\tau$ denote its
nontrivial automorphisms, and put $M=K(\sqrt{\mu}\,)$. 
Then $M/F$ is normal if and only if $\mu^{1-\rho} \eq 1$ 
for all $\rho \in \Gal(K/F)$. If this is the case, write
$\mu^{1-\sigma} = \alpha_\sigma^2$, 
$\mu^{1-\tau} = \alpha_\tau^2$ and
$\mu^{1-{\sigma\tau}} = \alpha_{\sigma\tau}^2$. It is easy 
to see that $\alpha_\rho^{1+\rho} = \pm 1$ for all 
$\rho \in \Gal(K/F)$; define 
$S(\mu,K/F) = (\alpha_\sigma^{1+\sigma}, \alpha_\tau^{1+\tau}, 
	\alpha_{\sigma\tau}^{1+{\sigma\tau}})$ and identify 
vectors which coincide upon permutation of their entries. Then
$$\Gal(M/F) \simeq
\begin{cases}
(2,2,2) & \iff S(\mu,K/F) = (+1, +1, +1),  \\
(2,4)   & \iff S(\mu,K/F) = (-1, -1, +1), \\
 D_4    & \iff S(\mu,K/F) = (-1, +1, +1), \\
 H_8    & \iff S(\mu,K/F) = (-1, -1, -1).
\end{cases}$$
Moreover, $M$ is cyclic over the fixed field of $\la \rho \ra$
if and only if $\alpha_\rho^{1+\rho} = -1$, and has type $(2,2)$
otherwise.
\end{lemma}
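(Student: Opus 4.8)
The plan is to translate the field-theoretic statement into the group $G=\Gal(M/F)$ and to read off every assertion from the action of lifted automorphisms on $\theta:=\sqrt\mu$. First I would dispose of the normality criterion: since $M=K(\sqrt\mu)$ is quadratic over $K$ and $K/F$ is normal, $M/F$ is normal iff each $\rho\in\Gal(K/F)$ carries $M=K(\sqrt\mu)$ to $K(\sqrt{\mu^\rho})=M$, which holds iff $\mu^{\rho}/\mu=\mu^{\rho-1}$ is a square in $K^\times$ for all $\rho$, i.e. $\mu^{1-\rho}\eq 1$. Assuming this, fix $\alpha_\rho\in K^\times$ with $\alpha_\rho^2=\mu^{1-\rho}$; applying $\rho$ and using $\rho^2=1$ gives $(\alpha_\rho^{1+\rho})^2=\mu^{(1+\rho)(1-\rho)}=\mu^{1-\rho^2}=1$, so $\alpha_\rho^{1+\rho}=\pm1$, and since replacing $\alpha_\rho$ by $-\alpha_\rho$ leaves $\alpha_\rho^{1+\rho}$ unchanged, the sign $s_\rho:=\alpha_\rho^{1+\rho}$ is well defined.

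Next I would set up the group. Writing $\Gal(M/K)=\la\kappa\ra$ with $\kappa(\theta)=-\theta$, the subgroup $\la\kappa\ra$ is normal of order $2$, so any conjugate of $\kappa$ is an order-$2$ element of $\la\kappa\ra$, hence equals $\kappa$; thus $\kappa$ is central and $G/\la\kappa\ra\simeq\Gal(K/F)\simeq(2,2)$. Because $\mu^\sigma=\mu/\alpha_\sigma^2=(\theta/\alpha_\sigma)^2$, each lift $\tilde\rho\in G$ of $\rho$ can be normalized so that $\tilde\rho(\theta)=\theta/\alpha_\rho$; then $\tilde\rho^2(\theta)=\theta/\alpha_\rho^{1+\rho}=s_\rho\,\theta$, independently of the choice of lift. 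Hence $\tilde\rho^2=1$ when $s_\rho=+1$ and $\tilde\rho^2=\kappa$ when $s_\rho=-1$. This already yields the last sentence: with $F_\rho$ the fixed field of $\la\rho\ra$, the group $\Gal(M/F_\rho)$ is the preimage $\la\tilde\rho,\kappa\ra$ of $\la\rho\ra$, which is cyclic of order $4$ exactly when $\tilde\rho^2=\kappa$, i.e. when $s_\rho=-1$, and of type $(2,2)$ otherwise.

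It then remains to pin down $G$ among the four groups of order $8$ possessing a central involution with quotient $(2,2)$, namely $(2,2,2)$, $(2,4)$, $D_4$, $H_8$ (the group $C_8$ is excluded, its quotients being cyclic). Here I would compute $\tilde\sigma\tilde\tau(\theta)=\theta/(\alpha_\sigma\alpha_\tau^\sigma)$ and $\tilde\tau\tilde\sigma(\theta)=\theta/(\alpha_\tau\alpha_\sigma^\tau)$, so that $[\tilde\sigma,\tilde\tau]=\kappa^{e}$ where $(-1)^{e}=\alpha_\sigma^{\tau-1}\alpha_\tau^{1-\sigma}=:c$. The element $c$ is indeed $\pm1$ because $c^2=\mu^{(1-\sigma)(\tau-1)+(1-\tau)(1-\sigma)}=\mu^0=1$, and consequently $G$ is abelian iff $c=+1$. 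The key computational step is the identity $s_\sigma s_\tau s_{\sigma\tau}=c$: from $1-\sigma\tau=(1-\sigma)+\sigma(1-\tau)$ one gets $(\alpha_\sigma\alpha_\tau^\sigma)^2=\mu^{1-\sigma\tau}$, so $\alpha_{\sigma\tau}\eq\alpha_\sigma\alpha_\tau^\sigma$; substituting this into $s_{\sigma\tau}=\alpha_{\sigma\tau}^{1+\sigma\tau}$ and simplifying with $\alpha_\rho^\rho=s_\rho\alpha_\rho^{-1}$ gives $s_{\sigma\tau}=s_\sigma s_\tau\,\alpha_\sigma^{1-\tau}\alpha_\tau^{\sigma-1}=c\,s_\sigma s_\tau$. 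Thus the parity of the number of $-1$'s among $s_\sigma,s_\tau,s_{\sigma\tau}$ records whether $G$ is nonabelian, while that number itself equals the number of cosets among $\bar\sigma,\bar\tau,\overline{\sigma\tau}$ whose elements have order $4$. A direct check against the four groups then matches $0$ minus-signs with $(2,2,2)$, one with $D_4$, two with $(2,4)$, and three with $H_8$, which is exactly the asserted table.

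The hard part is the identity $s_\sigma s_\tau s_{\sigma\tau}=c$; everything else reduces to short computations of the action on $\theta$. This identity is precisely what forces the parity of the sign vector to detect the (non)commutativity of $G$, and so guarantees both that only the four listed sign-patterns can occur and that each of them determines its Galois group unambiguously.
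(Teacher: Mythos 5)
Your argument is correct and follows essentially the same route as the paper's proof: both establish normality via Kummer theory, show that a lift of $\rho$ to $M$ squares to the generator of $\Gal(M/K)$ exactly when $\alpha_\rho^{1+\rho}=-1$ (so that $M$ is cyclic over the fixed field of $\rho$ precisely in that case), and then pin down $\Gal(M/F)$ among the four noncyclic groups of order $8$ by counting how many of the three quartic subextensions are cyclic. Your explicit commutator identity $s_\sigma s_\tau s_{\sigma\tau}=c$ is a welcome extra consistency check that the paper leaves implicit in its one-line appeal to the classification of groups of order $8$, but it does not change the underlying method.
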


\begin{proof}
Let $K/k$ be a quadratic extension and put $M = K(\sqrt{\mu}\,)$
for some $\mu \in K$. Let $\sigma$ denote the nontrivial
automorphism of $K/k$. Then $M/k$ is normal if and only
if $M^\sigma = M$, and by Kummer Theory this is equivalent
to $\mu^\sigma \eq \mu$, i.e. to $\mu^{1-\sigma} = \alpha_\sigma^2$
for some $\alpha_\sigma \in K^\times$. Since 
$(\alpha_\sigma^2)^{1+\sigma} = \mu^{(1-\sigma)(1+\sigma)} = 1$,
we see that $\alpha_\sigma = \pm 1$.

Next suppose that $M/k$ is normal; then 
$\widetilde{\sigma}: a+b\sqrt{\mu} \too 
	a^\sigma + b^\sigma \alpha_\sigma \sqrt{\mu}$
defines an automorphism of $M/k$ whose restriction to
$K/k$ coincides with $\sigma$. But now
$\widetilde{\sigma}^2: a+b\sqrt{\mu} \too 
      a+b\alpha^{1+\sigma} \sqrt{\mu}$,
hence $\widetilde{\sigma}$ has order $4$ if $\alpha^{1+\sigma}  = -1$
and order $2$ if $\alpha^{1+\sigma}  = +1$.

Now clearly $M/F$ will be normal if and only if $\mu^\rho \eq \mu$
for all $\rho \in \Gal(K/F)$, i.e. if and only if $M/k_i$ is
normal for all three quadratic subextensions $k_i$ of $K/k$.
Moreover, the noncyclic groups of order $8$ can be classified 
by their number of automorphisms of order 4: this number is
$0, 1, 2$ or $3$ if $G \simeq (2,2,2), D_4, (2,4)$ or $H_8$,
respectively. The claims of Lemma \ref{L1} now follow.
\end{proof}

Lemma \ref{L1} reduces the verification of $\Gal(M/k) \simeq H_8$
to a simple computation; in order to simplify the notation, put 
$\beta = (x_1\sqrt{d_1}+x_2\sqrt{d_2}\,)$,
$\gamma = (y_1+y_2\sqrt{d_1}\,)$ and 
$\delta = (z_1+z_2\sqrt{d_2}\,)$; then $\mu = r\beta\gamma\delta$,
and we find 
\begin{align*}
\alpha_\sigma 	& = ax_3z_3\sqrt{d_3}/(\beta^\sigma \delta^\sigma),
	      	& \alpha_\sigma^{1+\sigma} 	& = -1 \\
\alpha_\tau	& = ax_3y_3\sqrt{d_2}/(\beta^\sigma\gamma^\tau),
		& \alpha_\tau^{1+\tau} 		& = -1 \\
\alpha_{\sigma\tau} & = ay_3z_3/(\gamma^\tau\delta^\sigma),
		& \alpha_{\sigma\tau}^{1+{\sigma\tau}} & = -1.
\end{align*}
Therefore $\Gal(M/k) \simeq H_8$. Next we check that $M/\Q$
is normal. To this end, let $\rho$ be an extension of the
nontrivial automorphism of $k/\Q$, say
$\rho:\sqrt{d_i} \longmapsto -\sqrt{d_i}$ for $1 \le i \le 3$. 
Then $M/\Q$ is normal if and only of $M^\rho = M$, i.e. 
if $\mu^{1-\rho} \eq 1 \bmod K^\times$. A small computation
shows that $\mu^{1-\rho} = (ay_3z_3/(\gamma^\tau \delta^\sigma))^2$
and $\alpha_\rho^{1+\rho} = -1$. 

Finally we verify that $M/\Q(\sqrt{d_i}\,)$ is a $D_4$-extension
for $1 \le i \le 3$ (by using Lemma \ref{L1} again); this implies 
that $\Gal(M/\Q)$ is not isomorphic to $C_2 \times H_8$, and now 
$\Gal(M/\Q) \simeq D_4 \curlyvee C_4$ follows.

\subsection*{Ramification outside $2\infty$}
We start with $r = 1$ and integral solutions $x_i, y_i, z_i$ of our
system of equations. Let $\p$ be a prime ideal in 
$K$ lying above an odd prime $p$.  

Suppose first that $p \nmid d$. If $\p$ ramifies 
in $M/K$, then $\p \mid \mu$; since $M/\Q$ is normal,
all conjugates of $\p$ also ramify, hence also divide $\mu$,
and we find that $p \mid \mu$. Replacing $r$ by $r/p$  
we see that we can find an $r \in \Q$ such that $\mu$
is integral and not divisible by any prime not dividing $2d$,
and that the corresponding $\mu$ defines a quaternion
extension $M/k$ which is unramified outside $2d\infty$. 

Now assume that $p\mid d$ is odd. Then $\p$ is unramified 
in $K/k$, so if $\p$ ramifies in $M/K$ then the ramification
index $e_p(M/\Q)$ must equal $4$. The inertia subfield $M_T$ 
of $p$ has therefore degree $4$ over $\Q$, and all such fields are
easily seen to be $V_4$-extensions of $\Q$. If we assume
(without loss of generality) that $p \mid d_3$, then since
$p$ does not ramify in its inertia field, we must have 
$M_T = \Q(\sqrt{d_1}, \sqrt{d_2}\,)$. But now $M/M_T$
is a $V_4$-extension, so that only primes above $2$ can ramify
completely. This shows that all odd $p\mid d$ are
unramified in $M/k$. 

\subsection*{Ramification above $2$}
We will assume that 
$$\mu = r(x_1\sqrt{d_1} + x_2 \sqrt{d_2}\,)
	(y_1+y_2\sqrt{d_1}\,)(z_1+z_2\sqrt{d_2}\,) 
		\in K_{12} = \Q(\sqrt{d_1},\sqrt{d_2}\,)$$
is integral and not divisible by any rational prime $p$; 
moreover, we suppose that $d_1d_2$ is odd, i.e. that $2$
is unramified in $K_{12}/\Q$. We have to show that $\mu$ 
or $-\mu$ is $2$-primary in $K$, i.e. that $2$ does not 
ramify in at least one of the extensions $K(\sqrt{\mu}\,)/K$ or 
$K(\sqrt{-\mu}\,)/K$. 

Since $2 \nmid \mu$ and $2$ is not ramified, there is a 
prime ideal $\fQ$ above $2\OO_K$ which does not divide 
$\mu$. If we can show that $\mu$ is $\fQ$-primary, then 
$\fQ$ does not ramify in $M/K$; but $M/\Q$ is normal, so 
if $\fQ$ does not ramify, neither does any other
prime ideal above $2$, and our claim follows. It is
obviously sufficient to show that $\pm \mu$ is 
$\fQ$-primary in $K_{12}$. 

Suppose that $2$ splits completely
in $K_{12}/\Q$. Then $\OO_{12}/\fQ^m \simeq \Z/2^m\Z$,
hence $\mu \equiv a \bmod \fQ^2$ for some odd
integer $a$. But either $a$ or $-a$ is congruent to $1 \bmod 4$,
hence $\pm \mu \equiv 1 \bmod \fQ^2$ is $2$-primary.

Next assume that $\fQ$ has inertia degree $2$ in
$K_{12}/\Q$. Since the norm of $\mu$ to the three
quadratic subfields is either a square or $d_3$ times
a square, these norms are $2$-primary. Now we use the following

\begin{lemma}\label{L2}
Let $K/k$ be a quadratic extension, and let $\fq$ be a prime 
ideal in $\OO_k$ above $2$; assume moreover that $\fq$ is inert 
in $K/k$. If $N_{K/k} \mu$ is $\fq$-primary in $\OO_k$ and 
$\fq \nmid \mu$ then there exists an 
$\alpha \in \OO_k \setminus \fq$ such that 
$\mu \alpha$ is $\fq$-primary.
\end{lemma}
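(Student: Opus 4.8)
\subsection*{Proof plan}

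The plan is to reduce the statement to a congruence computation in the residue rings modulo a power of $\fq$, and then to exploit a feature special to characteristic $2$. Write $\fQ = \fq\OO_K$ for the inert prime above $\fq$, and let $e$ be the ramification index of $\fq$ over $2$. Recall that an element coprime to $2$ is $\fq$-primary exactly when it is congruent to a square modulo $\fq^{2e}$ (equivalently, adjoining its square root yields an extension unramified at $\fq$); thus the whole statement depends only on the image of $\mu$ in $(\OO_K/\fQ^{2e})^\times$, and producing $\alpha$ amounts to finding a unit class in $(\OO_k/\fq^{2e})^\times$ — which can then be lifted to an actual $\alpha \in \OO_k \setminus \fq$ — such that $\mu\alpha$ is a square modulo $\fQ^{2e}$.

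First I would make the structure of squares explicit in the unramified case $e = 1$ (the only case needed for the Proposition, where $d_1d_2$ is odd). There $\OO_K/\fQ^2$ is the Galois ring of the quadratic residue extension $\mathbb{F}_{q^2}/\mathbb{F}_q$ with $q = 2^f$. Using $(1+2\beta)^2 \equiv 1 \bmod 4$ together with the bijectivity of Frobenius on residue fields, one sees that the squares in $(\OO_K/\fQ^2)^\times$ are exactly the Teichm\"uller representatives; hence every unit can be written uniquely as $\mu \equiv \omega(\bar\mu)(1 + 2\beta) \bmod \fQ^2$ with $\bar\mu \in \mathbb{F}_{q^2}^\times$ and $\beta \in \mathbb{F}_{q^2}$, and $\mu$ is $\fq$-primary precisely when $\beta = 0$.

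Next I would compute the effect of the two operations in play on the invariant $\beta$. Multiplying $\mu$ by a unit $\alpha \in \OO_k$, with $\alpha \equiv \omega(\bar\alpha)(1 + 2\gamma) \bmod \fq^2$ and $\gamma \in \mathbb{F}_q$, replaces $\beta$ by $\beta + \gamma$; and, writing $\mu^\varphi$ for the image of $\mu$ under the nontrivial automorphism of $K/k$ (which reduces to the $q$-power Frobenius on residue fields), one finds $N_{K/k}\mu = \mu\mu^\varphi \equiv \omega(N\bar\mu)\bigl(1 + 2(\beta + \beta^q)\bigr) \bmod \fq^2$, so that $N_{K/k}\mu$ is $\fq$-primary exactly when $\Tr_{\mathbb{F}_{q^2}/\mathbb{F}_q}(\beta) = \beta + \beta^q = 0$. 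The crucial point — and here is where characteristic $2$ enters decisively — is that $\beta + \beta^q = 0$ is equivalent to $\beta^q = \beta$, i.e.\ to $\beta \in \mathbb{F}_q$. Thus the hypothesis that $N_{K/k}\mu$ is $\fq$-primary says exactly that $\beta$ lies in the subfield $\mathbb{F}_q = \OO_k/\fq$; and then the choice $\gamma = \beta$, now a legitimate element of $\mathbb{F}_q$, produces a unit $\alpha \in \OO_k \setminus \fq$ with $\mu\alpha$ primary.

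I expect the main obstacle to be organizing the local bookkeeping cleanly: pinning down the description of ``$\fq$-primary'' as ``congruent to a Teichm\"uller representative modulo $\fQ^{2e}$'', and verifying that multiplication by $\alpha \in \OO_k$ and the relative norm act on the invariant $\beta$ exactly as claimed. Once this is in place the proof is immediate, since the identity $\beta + \beta^q = 0 \iff \beta \in \mathbb{F}_q$ matches the hypothesis to the conclusion perfectly. For general $e$ the Teichm\"uller description must be replaced by a slightly more careful analysis of $(\OO_K/\fQ^{2e})^\times$ modulo squares, but the same characteristic-$2$ trace identity continues to drive the argument.
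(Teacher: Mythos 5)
Your proof is correct, but it takes a genuinely different route from the paper's. The paper argues by an explicit completing-the-square identity: after multiplying $\mu$ by $\xi^2$ for a $\xi$ of trace $1$ (so that $\nu+\nu'$ is a unit at $\fq$), it uses the primarity of the norm to extract $\eta \in \OO_k$ with $\nu\nu' \equiv \eta^2 \bmod \fq^2$ and then exhibits the multiplier directly as $\alpha = \nu + \nu' + 2\eta$, verified via $\alpha\nu \equiv (\nu + \sqrt{\nu\nu'}\,)^2 \bmod \fq^2$. You instead analyze the structure of the principal units: identifying $\OO_K/\fQ^2$ with the length-two Witt ring of ${\mathbb F}_{q^2}$, showing that squares of units are exactly the Teichm\"uller representatives, so that the obstruction to primarity is a single invariant $\beta \in {\mathbb F}_{q^2}$, on which multiplication by $\alpha$ acts by translation by an element of ${\mathbb F}_q$ and the norm acts by the trace; the characteristic-$2$ identity $\beta + \beta^q = 0 \iff \beta \in {\mathbb F}_q$ then finishes the argument. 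Both proofs exploit residue characteristic $2$ in an essential way, but yours isolates the exact local obstruction and makes clear that the hypothesis on $N_{K/k}\mu$ is precisely equivalent to the conclusion (not merely sufficient), at the cost of invoking the Teichm\"uller/Witt-ring structure rather than a bare computation; the paper's version produces a closed formula for $\alpha$ in terms of $\mu$, which is convenient for the explicit constructions elsewhere in the article. Your restriction of the detailed argument to the case where $\fq$ is unramified over $2$ is harmless: that is the only case in which the lemma is applied (the ramified case being handled separately via Lemma \ref{L3}), and the paper's own proof likewise works modulo $\fq^2$.
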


Assuming the truth of the lemma for the moment,
we find that it is sufficient to show that
$\alpha \in k_i$ is $\fq$-primary. But since $2$
splits in $k_i/\Q$, we have $\alpha \equiv \pm 1
\bmod \fq^{2}$, and we are done.

\begin{proof}[Proof of Lemma \ref{L2}]
We first claim that there exists a $\xi \in \OO_k 
\setminus \fq$ such that $\mu\xi^2 + \mu'{\xi'}^2 
\not\equiv 0 \bmod \fq$, where $\mu'$ denotes the
conjugate of $\mu$ with respect to $K/k$.
In fact if $\fq \nmid (\mu + \mu')$ then we can take 
$\xi = 1$; assume therefore that $\fq \mid (\mu + \mu')$. Since 
the trace is surjective in extensions of finite fields, there 
exists a $\xi \in \OO_K$ such that 
$\xi + \xi' \equiv 1 \bmod \fq$.  From $\fq \mid 2$ we get  
$\mu\xi^2 + \mu'{\xi'}^2 \equiv \mu(\xi+\xi')^2 \bmod \fq.$
Put $\nu = \mu \xi^2$; then $K(\sqrt{\mu}\,) = K(\sqrt{\nu}\,)$.
Since $\mu\mu' \equiv \eta_0^2 \bmod \fq^2$ for some 
$\eta_0 \in \OO_k$, we find
$\nu \nu' = \mu\mu'(\xi\xi')^2 \equiv  \eta^2 \bmod \fq^{2}$ 
for $\eta = \eta_0(\xi\xi') \in \OO_k$.
This implies at once that $\sqrt{\nu \nu'} \equiv \eta \bmod \fq$
in $\OO_L$, where $L = k(\sqrt{\mu\mu'}\,)$. 
Put $\alpha = \nu + \nu' + 2\eta$;
then $\alpha \in \OO_k$, $\fq\nmid \alpha$, and
$\alpha\nu \equiv \nu(\nu + \nu' + 2\sqrt{\nu\nu'}\,) 
	= (\nu+\sqrt{\nu\nu'}\,)^2 \bmod \fq^{2}$.
Therefore $\mu \alpha$ is $\fq$-primary in $KL$; but since
$L/k$ is unramified above $\fq$, this implies that 
$\mu \alpha$ is $\fq$-primary in $K$. 
\end{proof}

It remains to prove our claims if $2$ ramifies in $K_{12}/\Q$.
To this end we need

\begin{lemma}\label{L3}
Let $k$ be a quadratic number field with discriminant 
$d = d_1d_2d_3$ and assume that $M = K(\sqrt{\mu}\,)$ 
and $N = K(\sqrt{\nu}\,)$ are two $H_8$-extensions of $k$,
both constructed as in Prop. \ref{Pcon}. In particular,
they have the common subfield 
$K = k(\sqrt{d_1}, \sqrt{d_2}, \sqrt{d_3}\,)$.
Then there exists a $\delta \in \Z$ such that
$\nu \eq \delta\mu$. If, moreover, $M/k$ and $N/k$
are unramified outside $\infty$, then $\delta$ can be
chosen to be a discriminant dividing $d$. 
\end{lemma}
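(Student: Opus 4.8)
The plan is to reduce the whole statement to the single element $\theta = \mu\nu \in K^\times$. Since $\nu \eq \delta\mu$ is equivalent to $\theta = \mu\nu \eq \delta$, the first assertion amounts to showing that the square class of $\theta$ is rational, i.e. that $K(\sqrt{\theta}\,)/\Q$ is elementary abelian; for then $K(\sqrt{\theta}\,)$ is multiquadratic and $\theta \eq \delta$ for some squarefree $\delta \in \Z$. (If $M = N$ then $\theta \eq 1$ and $\delta = 1$ works, so I assume $M \ne N$, whence $[\Gal(K(\sqrt{\theta}\,)/\Q)] = 16$.) First I would record the sign data: both $M/\Q$ and $N/\Q$ are normal, so for every $g \in \Gal(K/\Q)$ one has $\mu^{1-g} \eq 1 \eq \nu^{1-g}$; writing $\mu^{1-g} = \alpha_g^2$ and $\nu^{1-g} = \beta_g^2$ as in Lemma \ref{L1}, the quantities $\alpha_g^{1+g},\beta_g^{1+g}$ lie in $\{\pm 1\}$.

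The crucial observation is that $\alpha_g^{1+g} = \beta_g^{1+g} =: \eps_g$ for each of the seven involutions $g$. Indeed this sign equals the square $g^2 \in \la -1 \ra$ of a lift of $g$ to $\Gal(M/\Q)$, resp. $\Gal(N/\Q)$, and since $M$ and $N$ realise the same group $D_4 \curlyvee C_4$ over the same $K$, with the same quaternion subgroup fixing the same field $k$ and the same ordering $d = d_1d_2d_3$, these lifts have equal square for every $g$. (Concretely, the four involutions fixing the cyclic quartic fields $L_0,\dots,L_3$ give $\eps_g = -1$, the other three give $\eps_g = +1$; this is the computation $\alpha_\sigma^{1+\sigma} = \alpha_\tau^{1+\tau} = \alpha_{\sigma\tau}^{1+\sigma\tau} = \alpha_\rho^{1+\rho} = -1$ already done for $\mu$, and it is identical for $\nu$.) Consequently $\theta^{1-g} = (\alpha_g\beta_g)^2$ with $(\alpha_g\beta_g)^{1+g} = \eps_g^2 = +1$ for all $g$. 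I would then argue directly, without invoking Lemma \ref{L1} (whose base field is quartic): lifting $g$ to $\tilde g \in \Gal(K(\sqrt{\theta}\,)/\Q)$ by $\tilde g(\sqrt{\theta}\,) = (\alpha_g\beta_g)^{-1}\sqrt{\theta}$ yields $\tilde g^2 = 1$, because the sign is $+1$. Since every automorphism of $K(\sqrt{\theta}\,)/\Q$ restricts to some $g$ on $K$ and differs from $\tilde g$ only by the central involution fixing $K$, every element of $\Gal(K(\sqrt{\theta}\,)/\Q)$ has order at most $2$. Hence this group is elementary abelian, $K(\sqrt{\theta}\,)/\Q$ is multiquadratic, and $\theta \eq \delta$ for a squarefree $\delta \in \Z$, proving the first assertion.

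For the second assertion I would pass to the genus field. If $M/k$ and $N/k$ are unramified outside $\infty$, then so is their compositum $MN/k$, and therefore the subextension $K(\sqrt{\theta}\,) = K(\sqrt{\delta}\,) \subseteq MN$ is unramified over $k$ at every finite prime (including those above $2$). Being also abelian over $\Q$ by the first part, $K(\sqrt{\delta}\,)$ is contained in the genus field $k_\gen$, which is the compositum of the fields $\Q(\sqrt{p^*}\,)$ over all prime discriminants $p^*$ dividing $d$. Thus $\sqrt{\delta} \in k_\gen$, so the square class of $\delta$ is a product of prime discriminants dividing $d$; choosing $\delta$ to be precisely that product makes it a discriminant dividing $d$, as claimed.

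The main obstacle is the equality of signs $\alpha_g^{1+g} = \beta_g^{1+g}$ in the second paragraph: one must check that $M$ and $N$ induce the very same extension datum on $\Gal(K/\Q)$ for all seven involutions, and not merely abstractly isomorphic groups over $k$. This is exactly where it is essential that $M$ and $N$ share $k$, $K$, and the factorization $d = d_1d_2d_3$. Once these signs are pinned down, the order-at-most-$2$ argument and the genus-field inclusion are routine.
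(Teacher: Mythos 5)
Your proof is correct and follows essentially the same route as the paper: both pass to the third quadratic subextension $K(\sqrt{\mu\nu}\,)$ of $MN/K$, use the sign data $\alpha_\psi^{1+\psi}=\beta_\psi^{1+\psi}=-1$ from the computation after Lemma \ref{L1} to see that $\Gal(K(\sqrt{\mu\nu}\,)/\Q)$ is elementary abelian, and then invoke genus theory (Prop.~\ref{P3}, resp.\ the genus field) for the divisibility $\delta\mid d$. Your version is merely more explicit in checking all seven involutions and in spelling out the ``every element has order at most $2$'' step that the paper leaves to ``the proof of Lemma \ref{L1}''.
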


\begin{proof}
Let $\Gal(K/\Q) = \la \rho, \sigma, \tau\ra \simeq (2,2,2)$.
If $M = N$, then $\nu \eq \mu$ and there is nothing to prove.
Assume therefore that $MN$ is a quartic extension of $K$ and
let $L$ denote the third quadratic subfield of $MN/K$.
The computations after Lemma \ref{L1} showed that 
$\mu^{1-\psi} = \alpha_\psi^2,\nu^{1-\psi} = \beta_\psi^2$,
and $\alpha_\psi^{1+\psi} = \beta_\psi^{1+\psi} = -1$
for $\psi \in  \{\rho, \sigma, \tau\}$. The proof of 
Lemma \ref{L1}
shows immediately that $\Gal(L/\Q) \simeq (2, 2, 2, 2)$,
and this proves our claim that $\nu \eq \delta\mu$ for some
$\delta \in \Z$. If $M/k$ and $N/k$ are unramified 
outside $\infty$, then so is $L/k$, and now the last
claim follows from Prop. \ref{P3}.
\end{proof}

Now let $M = K(\sqrt{\mu}\,)$ be the unramified $H_8$-extension
of $k$ constructed above (i.e. $\mu \in \Q(\sqrt{d_1}, \sqrt{d_2}\,)$
with $d_1d_2$ odd), and put $N = K(\sqrt{\nu}\,)$,
where $\nu \in \Q(\sqrt{d_1}, \sqrt{d_3}\,)$. By Lemma \ref{L3}
we know that over $K$ we have $\nu \eq2 \delta\mu$ for some
$\delta \in \Z$. Since $\mu$ is $2$-primary,
so is $m\mu$ or $-m\mu$ for every odd $m \in \Z$; we may
therefore assume without loss of generality that 
$\delta \in \{\pm 1, \pm 2\}$. 

If $4\nmid d_3$ and $\delta = \pm 1$, then we are done. 
If $d_3 \equiv 0 \bmod 8$, then $2$ or $-2$ is 
$2$-primary in $K$, and our claims also follow.
Finally, if $d_3 \equiv 4 \bmod 8$, then one
of $\{\nu, 2 \nu\}$ must be $2$-primary (note that
$-1$ is $2$-primary in this case).
This completes the proof of Prop. \ref{Pcon}.

\medskip

We have already remarked that the construction of the 
quaternion extension is much simpler when one of the
discriminants is a sum of two squares: assume for example
that $d_2 = s^2 + t^2$; then we can choose $a = 1$, and 
$z_1 = s$, $z_2 = 1$ and $z_3 = t$ 
solve equation (III), $y_1 = y_3 = 1$ and $y_2 = 0$ solve (II).
The following congruences help to choose the correct signs:
$$ \begin{array}{rcll}
\Big( 1+\sqrt{2a}\frac{1+\sqrt{b}}{2}\,\Big)^2  & \equiv & 
   (\sqrt{b}+\sqrt{2a}\,)(1+\sqrt{2a}\,) & \text{ if }
	a \equiv 1 \bmod 2, b\equiv 1 \bmod 8; \hfil \\
\Big( 1+\sqrt{2a}\frac{1-\sqrt{b}}{2}\,\Big)^2  & \equiv &
   (\sqrt{b}+\sqrt{2a}\,)(1-\sqrt{2a}\,) & \text{ if }
	 a \equiv 1 \bmod 2, b\equiv 5 \bmod 8; \hfil \\
(2\sqrt{a}+\sqrt{b}\,)(2+\sqrt{b}\,) & \equiv  & 1 \bmod 4
 	& \text{ if } a \equiv b \equiv 1 \bmod 4; \hfil \\
 (2\sqrt{a}-\sqrt{b}\,)\sqrt{b}\, & \equiv & (1+\sqrt{ab}\,)^2 \bmod 4
	& \text{ if } a \equiv 3 \bmod 4, b \equiv 1 \bmod 4. \\
\end{array}$$

\begin{table}[ht]
\caption{}
$$\begin{tabular}{|r|r|r|r|c|c|} \hline
 \rsp $d$  & $d_1$	& $d_2$ &  $d_3$ & $\mu$ & $\Cl(k)$ \\ \hline
 \rsp $3848$  & $8$  & $13$   & $37$   & 
	$(12\sqrt{2}+5\sqrt{13}\,)(18-5\sqrt{13}\,)$ & $(2,2)$\\
 \rsp $2120$ & $5$	& $8$	& $53$  &
	$(3\sqrt{5}+7\sqrt{2}\,)(1+\sqrt{2}\,)$ & $(2,2)$\\
 \rsp $1480$ & $5$	& $8$	& $37$  &
	$(3\sqrt{5}+2\sqrt{2}\,)(2-\sqrt{5}\,)$ & $(2,2)$\\
 \rsp $520$	& $5$	& $8$	& $13$  & 
	$(3\sqrt{2}+\sqrt{5}\,)(1+\sqrt{2}\,)$ & $(2,2)$\\
\hline
 \rsp $-120$ & $-3$ & $5$ & $8$ & 
	$(2\sqrt{2}+\sqrt{5}\,)(2+\sqrt{5}\,)$ & $(2,2)$ \\
 \rsp $ -255$ & $-3$   & $5$   & $17$ &
	$(\sqrt{5}+2\sqrt{-3}\,)(2+\sqrt{5}\,)$ & $(2,2,3)$ \\
 \rsp $-420$   & $-4$  & $5$  & $21$ &
	$(4i-\sqrt{5}\,)(2+\sqrt{5}\,)$ & $(2,2,2)$ \\
 \rsp $-455$  & $-7$	& $5$	& $13$	& 
	$(2\sqrt{13}-3\sqrt{5}\,)(2+\sqrt{5}\,)$ & $(2,2,5)$ \\
 \rsp $-520$	& $-8$	& $5$	& $13$	& 
	$(2\sqrt{-2}+\sqrt{5}\,)(2+\sqrt{5}\,)$ & $(2,2)$ \\
\hline\end{tabular}$$
\end{table}

See Table 2 for some numerical examples.
We will collect our main results in 

\begin{theorem}\label{thm1}
Let $k$ be a quadratic number field with discriminant $d$.
Then the following assertions are equivalent:
\begin{enumerate}
\item There exists an unramified $H_8$-extension $M/k$
	such that $M/\Q$ is normal;
\item There is a factorization $d = d_1d_2d_3$ of $d$ into
	three discriminants which are relatively prime
	and which satisfy the conditions
	$ (d_1d_2/p_3) = (d_2d_3/p_1) = (d_3d_1/p_2) = +1$ 
	for all $p_i\mid d_i$.
\end{enumerate}
\end{theorem}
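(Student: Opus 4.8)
The plan is to prove Theorem~\ref{thm1} by establishing the two implications separately. The forward direction $(1)\Rightarrow(2)$ has essentially already been carried out in the discussion following Proposition~1: if an unramified $H_8$-extension $M/k$ normal over $\Q$ exists, then the analysis of the inertia subgroups of $\Gamma = \Gal(M/\Q) \simeq D_4 \curlyvee C_4$ forces a factorization $d = d_1 d_2 d_3$ into three relatively prime discriminants satisfying the norm-residue conditions $(d_1d_2/p_3) = (d_2d_3/p_1) = (d_3d_1/p_2) = +1$. So for this half I would simply assemble the results already proved, citing Proposition~1 and the structural analysis of the subgroup lattice in Table~1, together with the fact (from Cor.~\ref{ChM}) that $\Gamma$ is generated by involutions outside $\Gal(M/k)$, which rules out $C_2 \times H_8$.

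The substance of the theorem lies in the reverse implication $(2)\Rightarrow(1)$, and here I would rely on Proposition~\ref{Pcon} as the engine. Given an $H_8$-factorization $d = d_1 d_2 d_3$, the plan is first to invoke the existence-of-$a$ argument: one produces an odd squarefree $a \in \Z$ (via quadratic reciprocity and Dirichlet's theorem on primes in arithmetic progressions) satisfying conditions (1)--(4), which in turn guarantees that the diophantine system (I)--(III) is nontrivially solvable over $\Z$. From an integral solution $(x_i, y_i, z_i)$ one forms the element $\mu = (x_1\sqrt{d_1}+x_2\sqrt{d_2}\,)(y_1+y_2\sqrt{d_1}\,)(z_1+z_2\sqrt{d_2}\,)/r$ and sets $M = \Q(\sqrt{d_1}, \sqrt{d_2}, \sqrt{d_3}, \sqrt{\mu}\,)$. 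The Galois-group computation via Lemma~\ref{L1} then shows $S(\mu, K/k) = (-1,-1,-1)$, so $\Gal(M/k) \simeq H_8$, and the auxiliary calculation that $M/\Q(\sqrt{d_i}\,)$ is dihedral for each $i$ upgrades this to $\Gal(M/\Q) \simeq D_4 \curlyvee C_4$.

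The remaining and most delicate point is \emph{unramifiedness}: one must show $M/k$ can be arranged to have trivial relative discriminant. For odd primes this is handled by the ramification-outside-$2\infty$ argument (choosing $r$ to strip off primes not dividing $2d$, then observing that a ramified odd $p \mid d$ would force inertia field $\Q(\sqrt{d_1},\sqrt{d_2}\,)$ inside a $V_4$-extension, a contradiction). The hard part is controlling ramification above $2$, which requires a careful $2$-primary analysis splitting into the cases according to the splitting type of $2$ in $K_{12}/\Q$ and the residue of $d_1 d_2$ modulo $8$; here Lemma~\ref{L2} (the trace-surjectivity trick producing a $\fq$-primary multiplier) and Lemma~\ref{L3} (comparing two such extensions to transfer $2$-primarity between the $\mu \in \Q(\sqrt{d_1},\sqrt{d_2}\,)$ and $\nu \in \Q(\sqrt{d_1},\sqrt{d_3}\,)$ constructions) do the essential work. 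Once $\pm\mu$ (or $\mu$, $2\mu$) is shown $2$-primary, $2$ is unramified in $M/K$, and since $M/K$ is unramified everywhere while $K = k_\gen$-type field is unramified over $k$, the extension $M/k$ is unramified as required.

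Thus the proof reduces to citing the forward analysis for $(1)\Rightarrow(2)$ and invoking Proposition~\ref{Pcon} in full for $(2)\Rightarrow(1)$; the main obstacle I anticipate is verifying that the $2$-primary constructions genuinely cover every congruence class of $d_1d_2 \bmod 8$ simultaneously with the requirement that $\mu$ lie in a single biquadratic subfield, which is precisely why the flexibility afforded by Lemma~\ref{L3} — allowing one to pass between the $(d_1,d_2)$ and $(d_1,d_3)$ presentations — is indispensable.
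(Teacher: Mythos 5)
Your proposal is correct and follows essentially the same route as the paper: the forward implication is exactly the inertia-group analysis summarized in the first Proposition, and the reverse implication is Proposition~\ref{Pcon}, whose proof (existence of $a$, the Galois-group computation via Lemma~\ref{L1}, and the $2$-primary analysis using Lemmas~\ref{L2} and~\ref{L3}) you have reproduced faithfully. Note only that ``unramified'' in this paper refers to finite primes, so ramification at $\infty$ need not be addressed for Theorem~\ref{thm1}.
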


Next we will show that this unramified $H_8$-extension
is unique if the discriminants $d_i$ are prime:

\begin{prop}\label{PUn}
Let $k$ be a quadratic number field with discriminant 
$d = d_1d_2d_3$ and assume that $M = K(\sqrt{\mu}\,)$ 
and $N$ are unramified $H_8$-extensions of $k$ with 
common subfield $K = k(\sqrt{d_1}, \sqrt{d_2}, \sqrt{d_3}\,)$; 
then there exists a discriminant $\delta \mid d$ such that
$N = K(\sqrt{\delta\mu}\,)$. 
\end{prop}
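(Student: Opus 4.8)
The plan is to reduce the statement to the argument already used in the proof of Lemma~\ref{L3}, and to observe that this argument needs only the Galois-theoretic hypotheses, not the explicit shape of the elements furnished by Prop.~\ref{Pcon}. Since $N/K$ is quadratic, Kummer theory gives $N = K(\sqrt{\nu}\,)$ for some $\nu \in K^\times$, and it suffices to produce a discriminant $\delta \mid d$ with $\nu \eq \delta\mu$. Observe first that, as $H_8$ has a unique subgroup of order $2$ (its centre), each of $M/k$ and $N/k$ possesses a unique quartic subextension; since $K$ is a quartic subextension common to both, it is this one, whence $\Gal(K/k) \simeq (2,2)$.

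The main step is to establish intrinsically the sign conditions that the proof of Lemma~\ref{L3} obtained by direct calculation. For an order-$2$ automorphism $\psi$ of $K$ write $\mu^{1-\psi} = \alpha_\psi^2$ and $\nu^{1-\psi} = \beta_\psi^2$; by the first part of the proof of Lemma~\ref{L1} the value $\alpha_\psi^{1+\psi} \in \{\pm 1\}$ depends only on $\mu$ and $\psi$, and equals $-1$ exactly when the lift of $\psi$ to $M = K(\sqrt{\mu}\,)$ has order $4$. For the nontrivial elements $\sigma,\tau,\sigma\tau$ of $\Gal(K/k)$, Lemma~\ref{L1} applied with $F = k$ gives $S(\mu,K/k) = S(\nu,K/k) = (-1,-1,-1)$, because $\Gal(M/k) \simeq \Gal(N/k) \simeq H_8$. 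For the remaining generator $\rho$ of $\Gal(K/\Q)$, normality of $M/\Q$ and $N/\Q$ together with Cor.~\ref{ChM} forces $\Gal(M/\Q) \simeq \Gal(N/\Q) \simeq D_4 \curlyvee C_4$; since there $\rho^2 = -1$ lies in $\Gal(M/K)$ (and likewise in $\Gal(N/K)$), the element $\rho$ has order $4$ in each group and is an order-$4$ lift of $\rho|_K$, so $\alpha_\rho^{1+\rho} = \beta_\rho^{1+\rho} = -1$.

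With these signs in hand the conclusion follows as in Lemma~\ref{L3}. If $M = N$ take $\delta = 1$; otherwise $MN/K$ is a $(2,2)$-extension, and we let $L = K(\sqrt{\mu\nu}\,)$ be its third quadratic subfield. For each generator $\psi \in \{\rho,\sigma,\tau\}$ the sign attached to $\mu\nu$ is $\alpha_\psi^{1+\psi}\beta_\psi^{1+\psi} = +1$, so every generator of $\Gal(K/\Q)$ lifts to an involution in $\Gal(L/\Q)$, and the reasoning behind Lemma~\ref{L1} yields $\Gal(L/\Q) \simeq (2,2,2,2)$. Hence $L = \Q(\sqrt{d_1},\sqrt{d_2},\sqrt{d_3},\sqrt{\delta}\,)$ for some squarefree $\delta \in \Z$, so that $\mu\nu \eq \delta$ and therefore $\nu \eq \delta\mu$. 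Finally, since $M/k$ and $N/k$ are unramified, so are $L/k$ and its subextension $k(\sqrt{\delta}\,)/k$; by Prop.~\ref{P3} this unramified quadratic extension corresponds to a discriminant $\delta \mid d$, and we obtain $N = K(\sqrt{\delta\mu}\,)$.

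The step I expect to be the main obstacle is the intrinsic determination of the $\rho$-sign in the second paragraph: the explicit computation of Lemma~\ref{L3} must be replaced by the structural input that normality over $\Q$ pins down $\Gal(M/\Q) \simeq D_4 \curlyvee C_4$, and hence the order of the lift of $\rho$. A secondary subtlety, again supplied by the proof of Lemma~\ref{L1}, is the passage from ``each generator lifts to an involution'' to ``$\Gal(L/\Q) \simeq (2,2,2,2)$'', which requires that these lifts commute rather than generating a nonabelian group of order $16$.
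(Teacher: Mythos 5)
Your overall strategy is sound and the conclusion is true, but there is a genuine gap at exactly the step you flagged as the main obstacle, and the structural input you offer does not close it. Knowing that $\Gal(M/\Q)$ and $\Gal(N/\Q)$ are each abstractly isomorphic to $D_4\curlyvee C_4$ tells you that in each group exactly one of the four elements of $\Gal(K/\Q)\setminus\Gal(K/k)$ has lifts of order $4$ (the restriction of a generator of the centre; the other three restrictions lift to involutions, cf.\ $\la -1,\rho\sigma\ra$, $\la -1,\rho\tau\ra$, $\la -1,\rho\sigma\tau\ra$ in Table~1). It does not tell you that $M$ and $N$ single out the \emph{same} element of $\Gal(K/\Q)$: the symbol $\rho$ in your second paragraph names a generator of the centre of $\Gal(M/\Q)$ and, separately, one of $\Gal(N/\Q)$, and nothing in your argument shows that these restrict to the same automorphism of $K$. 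If they restricted to different elements, the product sign at each of them would be $-1$, the field $L = K(\sqrt{\mu\nu}\,)$ would be cyclic of degree $4$ over some quartic subfield of $K$, and $\Gal(L/\Q)\not\simeq(2,2,2,2)$. Your secondary worry is tied to the same point: matching signs at three \emph{generators} of $\Gal(K/\Q)$ do not force a central extension of $(2,2,2)$ by $C_2$ to be elementary abelian ($D_4\times C_2$ is a counterexample); one needs the sign $+1$ at all seven nontrivial elements, which again reduces to the two distinguished elements coinciding.

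The gap can be closed in two ways. (i) Apply the inertia-group analysis of Section~2 to $N$ as well: the fixed fields of the three dihedral subgroups of $\Gal(N/\Q)$ are quadratic subfields of $K$, distinct from $k$, with pairwise coprime discriminants whose product is $d$; since each such discriminant is a product of a nonempty subset of $\{d_1,d_2,d_3\}$, disjointness and the product condition force these fields to be $\Q(\sqrt{d_1})$, $\Q(\sqrt{d_2})$, $\Q(\sqrt{d_3})$, so the centre of $\Gal(N/\Q)$, like that of $\Gal(M/\Q)$, restricts to the automorphism of $K$ negating all three $\sqrt{d_i}$, and all your signs then match. (ii) The paper's route avoids the $\rho$-sign entirely: Lemma~\ref{L1} applied over $k$ (where the three nontrivial elements of $\Gal(K/k)$ do exhaust the group) gives $\Gal(L/k)\simeq(2,2,2)$; since $M/k$ and $N/k$ are unramified so is $L/k$, and then Prop.~\ref{P3} and genus theory force $L$ to be abelian over $\Q$, i.e.\ $L\subseteq k_\gen$ and $L=K(\sqrt{\delta}\,)$ for a discriminant $\delta\mid d$. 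Route (ii) is where the unramifiedness hypothesis of Prop.~\ref{PUn} (which Lemma~\ref{L3} lacks) actually enters, and is why the paper can dispose of the proposition in two lines.
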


\begin{proof}
This is proved exactly as Lemma \ref{L3}: 
$MN$ contains a subfield $L$ such that $L/k$ is
unramified of type $(2,2,2)$ over $k$. Then Prop. \ref{P3}
shows that $L = K(\sqrt{\delta}\,)$ for some discriminant
$\delta$ dividing $d$. 
\end{proof}

\begin{cor}\label{Cuni}
Suppose that $d = d_1d_2d_3$ and that the $d_i$ are prime 
discriminants; then there exists at most one unramified
$H_8$-extension of $k$. 
\end{cor}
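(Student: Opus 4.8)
The plan is to derive the corollary directly from Proposition \ref{PUn}, the only additional ingredient being that primality of the $d_i$ forces the ambiguity $\delta$ to be trivial in $K$. Throughout I take the extensions to be normal over $\Q$, as in the rest of the paper.

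First I would verify that any two unramified $H_8$-extensions $M$ and $N$ of $k$ actually share the common quartic subfield required by Proposition \ref{PUn}. If $K$ denotes the quartic subfield of $M/k$, then $K/k$ is elementary abelian and unramified, so $K \subseteq k_\gen$. Writing $d = d_1 d_2 d_3$ as the factorization into prime discriminants, we have $k_\gen = \Q(\sqrt{d_1}, \sqrt{d_2}, \sqrt{d_3})$, and since the number $t$ of prime discriminant factors equals $3$ we get $[k_\gen : k] = 2^{t-1} = 4 = [K:k]$ (recall $\Gal(K/k) \simeq (2,2)$). Hence $K = k_\gen$, and the same reasoning applies to $N$; in particular $M$ and $N$ meet the hypotheses of Proposition \ref{PUn} with the common subfield $K = \Q(\sqrt{d_1}, \sqrt{d_2}, \sqrt{d_3})$.

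Next I would invoke Proposition \ref{PUn} to write $N = K(\sqrt{\delta\mu})$ for some discriminant $\delta$ dividing $d$, where $M = K(\sqrt{\mu})$. The key step is to observe that, because the $d_i$ are prime, the discriminant divisors of $d = d_1 d_2 d_3$ are exactly the products of subsets of $\{d_1, d_2, d_3\}$. Thus $\delta$ is one such product, and since $K$ contains each $\sqrt{d_i}$ we get $\sqrt{\delta} \in K$, i.e. $\delta \eq 1$ in $K$. Therefore $N = K(\sqrt{\mu}) = M$, so there is at most one unramified $H_8$-extension of $k$.

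I expect the point requiring care to be precisely this last step, where primality is essential: if some $d_i$ were composite, then a prime discriminant factor of $d_i$ would be a discriminant dividing $d$ whose square root does \emph{not} lie in $K$, and the corresponding $\delta$ would produce a genuinely distinct extension $N \ne M$. Everything else is routine genus-theoretic bookkeeping, so the real content of the corollary is the identification $K = k_\gen$ together with the triviality of every admissible $\delta$ in $K^\times/K^{\times 2}$.
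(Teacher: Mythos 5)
Your proof is correct and follows essentially the same route as the paper: invoke Proposition \ref{PUn} and observe that primality of the $d_i$ forces every discriminant $\delta \mid d$ to be a product of the $d_i$, whence $\sqrt{\delta} \in K$ and $N = M$. Your additional verification that the quartic subfield of any unramified $H_8$-extension must equal $k_\gen = \Q(\sqrt{d_1},\sqrt{d_2},\sqrt{d_3})$ is a welcome detail that the paper leaves implicit in the hypotheses of Proposition \ref{PUn}.
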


\begin{proof}
Since the $d_i$ are prime discriminants, we see
$\sqrt{\delta} \in K$; this implies $M = N$ in Prop. \ref{PUn}.
\end{proof}

\section{Ramification at $\infty$}

Assume that $d = d_1d_2d_3$ is an $H_8$-factorization.
If $d$ has a prime factor $q \equiv 3 \bmod 4$, then
there alwas exists an $H_8$-extension of $k$
containing $K = \Q(\sqrt{d_1}, \sqrt{d_2}, \sqrt{d_3}\,)$
which is unramified everywhere. In fact, the element $\mu$
constructed in Prop. \ref{Pcon} is either totally positive
or totally negative (since $K(\sqrt{\mu}\,)/\Q$ is normal);
if $\mu \gg 0$, $K(\sqrt{\mu}\,)/k$ is the sought extension,
and if $\mu \ll 0$, then we take $K(\sqrt{-q\mu}\,)/k$.

If, on the other hand, $d$ is the sum of two squares,
then either every unramified $H_8$-extension of $k$ is
also unramified at $\infty$, or none is. The following 
proposition tells us that this depends only on certain
biquadratic reciprocity symbols (the special case where all
the $d_i$ are prime discriminants such that $(d_i/d_j) = -1$ 
for $i \ne j$ is due to Hettkamp \cite{Het}):

\begin{prop}
Let $d_1, d_2, d_3$ be positive discriminants, none of which
is divisible by a prime $q \equiv 3 \bmod 4$,
and assume that $d = d_1d_2d_3$ is an $H_8$-factorization.
Then any $H_8$-extension $M/k$ of 
$k = \Q(\sqrt{d}\,)$ containing $\Q(\sqrt{d_1},\sqrt{d_2},\sqrt{d_3}\,)$
and unramified outside $\infty$ is totally real if and only if
$$ \Big(\frac{d_1d_2}{d_3}\Big)_4 
   \Big(\frac{d_2d_3}{d_1}\Big)_4 
   \Big(\frac{d_3d_1}{d_2}\Big)_4 \ = \ 
   \Big(\frac{d_1}{d_2}\Big)\Big(\frac{d_2}{d_3}\Big)
   \Big(\frac{d_3}{d_1}\Big). $$
Here $(d_1d_2/d_3)_4 = \prod_{p_3 \mid d_3} (d_1d_2/p_3)_4$,
and $(d/2)_4 = (-1)^{(d-1)/8}$.
\end{prop}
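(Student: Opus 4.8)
The plan is to reduce the statement to the computation of one sign, namely $\sign(\mu)$. Since all three discriminants are positive, $K = \Q(\sqrt{d_1},\sqrt{d_2},\sqrt{d_3}\,)$ is totally real of degree $8$, and in the construction of Prop.\ \ref{Pcon} we may take $a=1$ (each $d_i$ is positive with no prime factor $\equiv 3\bmod 4$, hence a sum of two squares). Because $M/\Q$ is normal we have $\mu^{1-\psi}\eq 1$ for every $\psi\in\Gal(K/\Q)$, and as squares in $K^\times$ are totally positive while $\Gal(K/\Q)$ permutes the real embeddings transitively, all real embeddings of $\mu$ carry one and the same sign; thus $M=K(\sqrt{\mu}\,)$ is totally real if and only if $\mu\gg 0$. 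Moreover, by Lemma \ref{L3} any two extensions as in the statement differ by $\sqrt{\delta}$ for a discriminant $\delta\mid d$, and every prime discriminant dividing $d$ is positive (the $2$-part of $d$ is $1$ or $8$, since $-4,-8$ would force some $d_i<0$), so every such $\delta>0$; hence $\sign(\mu)$, and with it total reality, is a property of $K$ alone, as the statement requires.

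Next I would pin down this sign intrinsically. Write $\mu=r\beta\gamma\delta$ with $\beta=x_1\sqrt{d_1}+x_2\sqrt{d_2}$, $\gamma=y_1+y_2\sqrt{d_1}$, $\delta=z_1+z_2\sqrt{d_2}$ and $r>0$. Evaluating at the embedding sending each $\sqrt{d_i}$ to its positive root, the equations (I)--(III) with $a=1$ give $d_2x_2^2-d_1x_1^2=d_3x_3^2\ge 0$, $y_1^2-d_1y_2^2=y_3^2\ge 0$ and $z_1^2-d_2z_2^2=-z_3^2\le 0$, whence the dominant terms yield $\sign(\beta)=\sign(x_2)$, $\sign(\gamma)=\sign(y_1)$, $\sign(\delta)=\sign(z_2)$, so that $\sign(\mu)=\sign(x_2y_1z_2)$. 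The individual signs are not fixed by (I)--(III) alone: replacing $(z_1,z_2,z_3)$ by $(z_1,-z_2,z_3)$ replaces $\delta$ by $\delta^\sigma$, and $\delta\delta^\sigma=-z_3^2$ forces $\delta^\sigma\eq-\delta$, hence $\mu\eq-\mu$. But $-1$ is not $2$-primary in the totally real field $K$ (the extension $\Q_2(\sqrt{-1})/\Q_2$ is ramified), so exactly one of $\pm\mu$ is $2$-primary; the requirement that $M/k$ be unramified outside $\infty$ selects that representative, and this normalization determines $\sign(x_2y_1z_2)$ unambiguously.

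With the sign so normalized I would evaluate it by reciprocity. The $2$-primarity condition is a congruence for $\mu$ modulo $\fQ^2$ (for $\fQ\mid 2$) which couples the signs of $x_2,y_1,z_2$ to the residues of $\beta,\gamma,\delta$; combined with the norm relations $\mu^{1+\sigma}\eq d_3$, $\mu^{1+\tau}\eq d_3$, $\mu^{1+\rho}\eq 1$, this exhibits $\mu$ as a local square up to discriminant factors at each $p_k\mid d_k$. Factoring the $d_i$ in $\Z[i]$ and feeding these local conditions through rational quartic reciprocity should turn the archimedean invariant $\sign(\mu)$ into the product of rational quartic residue symbols $(d_id_j/p_k)_4$, with the even prime contributing exactly the supplement $(d/2)_4=(-1)^{(d-1)/8}$, so that $\sign(\mu)=+1$ becomes equivalent to
$$\Big(\frac{d_1d_2}{d_3}\Big)_4\Big(\frac{d_2d_3}{d_1}\Big)_4\Big(\frac{d_3d_1}{d_2}\Big)_4 \;=\; \Big(\frac{d_1}{d_2}\Big)\Big(\frac{d_2}{d_3}\Big)\Big(\frac{d_3}{d_1}\Big),$$
the right-hand product of quadratic symbols appearing as the reciprocity defect produced when the three equations (I)--(III) are compared.

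The main obstacle is precisely this last reduction. The passage to $\sign(\mu)=\sign(x_2y_1z_2)$ and its pinning by $2$-primarity are elementary, but extracting the \emph{quartic} symbols requires a careful local analysis at each odd $p_k\mid d_k$ and at $2$: one must show that the genuinely archimedean sign equals the displayed product on the nose, not merely up to the quadratic factors. I expect the two delicate points to be (i) verifying that the $2$-primary normalization contributes exactly $(d/2)_4$ at the even prime, and (ii) tracking the exchange $d_1\leftrightarrow d_2$ used to arrange $d_2>0$ in Prop.\ \ref{Pcon}, so that the final identity is symmetric in $d_1,d_2,d_3$ as stated. Once the rational biquadratic reciprocity law (with its supplements) is available, these reduce to finite, if intricate, computations.
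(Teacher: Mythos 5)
Your framework is the right one and it is in fact the paper's: reduce total reality to the sign of $\mu$ at a single real embedding (legitimate, since normality of $M/\Q$ forces all real conjugates of $\mu$ to carry the same sign, and Lemma \ref{L3} together with the positivity of every discriminant $\delta\mid d$ in this situation makes that sign independent of which $M$ one takes), then use $2$-primarity to remove the ambiguity $\mu\leftrightarrow -\mu$. The determination $\sign(\mu)=\sign(x_2y_1z_2)$ from the dominant terms of (I)--(III) with $a=1$ is also correct.

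The gap is that everything after this point --- which is the actual content of the proposition --- is announced rather than proved. ``Feeding these local conditions through rational quartic reciprocity should turn the archimedean invariant into the product of quartic symbols'' is exactly the step that must be carried out, and it is where the specific shape of the identity is decided: why the right-hand side is precisely $(d_1/d_2)(d_2/d_3)(d_3/d_1)$, and why the even prime contributes exactly $(d/2)_4$. The paper makes this computation feasible by first simplifying $\mu$: with $a=1$ one takes $\gamma=1$ and $\delta=b+\sqrt{d_2}$, where $d_2=a^2+b^2$ with $2\mid b$, so that $\mu=(x\sqrt{d_1}+y\sqrt{d_2}\,)(b+\sqrt{d_2}\,)$, total positivity collapses to the single condition $y>0$, and $2$-primarity collapses to the single congruence $b+x+y\equiv 1\bmod 4$. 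It then reads the equation $d_1x^2-d_2y^2=-d_3z^2$ modulo $d_3$, $d_1$, $d_2$, $x$, $y$ and $z$ to evaluate each symbol explicitly, e.g.\ $(d_1d_2/d_3)_4=(xy/d_3)(d_2/d_3)$ and $(-1/d_2)_4=(-1)^{b/2}$, and after collecting terms obtains $\prod(d_id_j/d_k)_4=(-1)^{(b+x+|y|-1)/2}(d_3/d_2)$; only then does the normalization $b+x+y\equiv 1\bmod 4$ convert $y>0$ into the displayed identity. Your plan, which retains all three factors $\beta\gamma\delta$ with three undetermined signs and proposes to factor the $d_i$ in $\Z[i]$, would have to reproduce every one of these cancellations, and the separate case $8\mid d_2$ (where the $2$-primarity criterion becomes $y\equiv(2/d_1)\bmod 4$ and the supplement $(d_1d_3/8m)_4=(-1)^{(d_1d_3-1)/8}$ enters) is not addressed at all. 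Until that computation is actually performed, the equivalence with the stated symbol identity has not been established.
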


\begin{proof}
Let us first assume that $d \equiv 1 \bmod 4$. 
If $(x,y,z)$ is a solution of 
\begin{equation}\label{EE} d_1x^2 - d_2y^2 = -d_3z^2, \end{equation}
then $x$ is even and $yz$ is odd. Write $d_2 = a^2+b^2$
as a sum of squares with $2 \mid b$. Then the square
root of $\mu = (x\sqrt{d_1} + y \sqrt{d_2}\,)(b+\sqrt{d_2})$
generates the unramified  $H_8$-extension of $k$
if and only if $\mu$ is $2$-primary. But the congruences
$xb \equiv 0$, $b\sqrt{d_2} \equiv b$, $x \sqrt{d_1d_2} \equiv x
\bmod 4$ show that $\mu \equiv b+x+y \bmod 4$. 
Therefore, $\mu$ is $2$-primary if and only if 
$b+x+y \equiv 1 \bmod 4$. This can be achieved
by replacing $\mu$ by $-\mu$; assume therefore
that $\mu$ is $2$-primary.

Changing the sign of $b$ does not affect the primarity
of $\mu$. We are therefore allowed to assume that $b > 0$. 
Then $\mu$ is totally positive if and only if $y > 0$. 

Next we compute a few residue symbols from Eq. (\ref{EE}).
Since it implies the congruence $d_1x^2 \equiv d_2y^2 \bmod d_3$,
we find
$$ \Big(\frac{d_1d_2}{d_3}\Big)_4 = \Big(\frac{xy}{d_3}\Big)
   \Big(\frac{d_2}{d_3}\Big).$$
Here we have used that $d_3$ is not divisible by any prime 
$q \equiv 3 \bmod 4$. In a similar way we get
$$ \Big(\frac{d_2d_3}{d_1}\Big)_4 = \Big(\frac{yz}{d_1}\Big)
   \Big(\frac{d_3}{d_1}\Big) \quad \text{ and } \quad 
   \Big(\frac{d_3d_1}{d_2}\Big)_4 = \Big(\frac{-1}{d_2}\Big)_4
	\Big(\frac{zx}{d_2}\Big)\Big(\frac{d_3}{d_2}\Big) .$$
Multiplying these equation yields
$$ \Big(\frac{d_1d_2}{d_3}\Big)_4 
   \Big(\frac{d_2d_3}{d_1}\Big)_4 
   \Big(\frac{d_3d_1}{d_2}\Big)_4 \ = \ 
	\Big(\frac{xy}{d_3}\Big)\Big(\frac{d_2}{d_3}\Big)
	\Big(\frac{yz}{d_1}\Big)\Big(\frac{d_3}{d_1}\Big)
	\Big(\frac{-1}{d_2}\Big)_4 \Big(\frac{d_3}{d_2}\Big) 
	\Big(\frac{zx}{d_2}\Big).$$
Now $(d_1d_2/d_3) = 1$ by assumption; moreover 
$(-1/d_2)_4 = (-1)^{b/2}$. On the other hand, looking 
at (\ref{EE}) modulo $y$ and $z$ we get
$(d_1/y) = (-1/y)(d_3/y)$ and $(d_1/z) = (d_2/z)$.
In order to compute $(x/d_2)$ we write $x = 2^jx'$,
where $x'$ is the odd part of $x$. Then
$(x/d_2)= (2/d_2)^j (x'/d_2) = (2/d_2)^j (d_2/x')$ and
$(x/d_3)= (2/d_3)^j (x'/d_3) = (2/d_3)^j (d_3/x')$;
from Eq. (\ref{EE}) we get $(d_2/x') = (d_3/x')$. Moreover,
$j = 1 \iff x \equiv 2 \bmod 4 \iff d_2d_3 \equiv 5 \bmod 8$,
hence $(2/d_2)^j (2/d_3)^j = (2/d_2d_3) = (-1)^{x/2}$. 
Collecting everything gives
$$ \begin{array}{rcl}
   \ds \Big(\frac{d_1d_2}{d_3}\Big)_4 
   \Big(\frac{d_2d_3}{d_1}\Big)_4 
   \Big(\frac{d_3d_1}{d_2}\Big)_4 & = &
    \ds (-1)^{b/2} (-1)^{x/2}\Big(\frac{-1}{y}\Big)
		\Big(\frac{d_3}{d_2}\Big) \\
     & = & \ds (-1)^{(b+x+|y|-1)/2} \Big(\frac{d_3}{d_2}\Big) .
		\end{array} $$
Now the congruence $b+x+y \equiv 1 \bmod 4$ shows that 
$y > 0$ is equivalent to 
$$ \Big(\frac{d_1d_2}{d_3}\Big)_4 
   \Big(\frac{d_2d_3}{d_1}\Big)_4 
   \Big(\frac{d_3d_1}{d_2}\Big)_4 \ = \ \Big(\frac{d_3}{d_2}\Big).$$
Our claim follows since $(d_1/d_2)(d_3/d_1) = (d_2d_3/d_1) = +1$. 

If one of the prime discriminants is divisible $8$ then there 
are a few complications, but the very same proof shows 
that the result is valid also in this case. We may
assume without loss of generality that $8 \mid d_2$. We start
with the equation
\begin{equation}\label{EE2}
d_1x^2 - 2my^2 = -d_3z^2,
\end{equation}
(where $m \equiv 1 \bmod 4$) and put 
$\mu = (x\sqrt{d_1}+y\sqrt{2m}\,)(t+\sqrt{2m}\,)$,
where $2m = t^2+u^2$ for some $t \equiv 1 \bmod 4$; observe 
that $xyz \equiv 1 \bmod 2$. Then $\mu$ is $2$-primary if and 
only if $y \equiv (2/d_1) \bmod 4$. As above, we find
$$ \Big(\frac{8md_1}{d_3}\Big)_4 = \Big(\frac{xy}{d_3}\Big), \quad
   \Big(\frac{8md_3}{d_1}\Big)_4 = \Big(\frac{yz}{d_1}\Big), \quad
   \Big(\frac{d_3d_1}{8m}\Big)_4 = (-1)^{(d_1d_3-1)/8}.$$
Now $(x/d_3) = (d_3/x) = (d_2/x) = (2m/x)$, 
$(y/d_3) = (-1/y)(y/d_1)$  and $(z/d_1) = (2/z)$; a routine 
computation modulo $16$ shows that $(2/x)(2/z) = (d_3d_1/8m)_4$.
This gives
$$ \Big(\frac{8md_1}{d_3}\Big)_4 
   \Big(\frac{8md_3}{d_1}\Big)_4 
   \Big(\frac{d_3d_1}{8m}\Big)_4 \ = \ \Big(\frac{-1}{y}\Big)
	 \ = \ (-1)^{(|y|-1)/2}.$$
As in the case $d \equiv 1 \bmod 4$ above, this implies
that a $2$-primary $\mu$ is totally positive if and
only if $(8md_1/d_3)_4(8md_3/d_1)_4(d_1d_3/8m)_4 = (2/d_1)$;
since $(2/d_3)(d_3/2) = +1$, our claim follows.
\end{proof}

\section{Unramified Dihedral Extensions}

Of course, the very same methods allow us to treat unramified
dihedral extensions of quadratic number fields. In fact,
the proofs in this case are much simpler than those for
quaternion extensions and are left as an exercise
(for complete proofs in a more general situation, 
see \cite{Le2}). In fact, it is easy to see that
unramified extensions $M/k$ of quadratic number fields
$k$ which are normal over $\Q$ have Galois group $D_4 \times C_2$.
Using the decomposition and inertia groups one finds
that the existence of such an extension implies a
factorization $d = \disc k = d_1d_2\cdot d_3$ into three
(relatively prime) discriminants such that
$(d_1/p_2) = (d_2/p_1) = +1$ for all $p_j\mid d_j$ ($j  = 1, 2)$.
On the other hand, such a factorization implies the existence
of an unramified $C_4$-extension $L$ of $\Q(\sqrt{d_1d_2}\,)$,
and it is easy to see that the compositum $M = kL$ is an
unramified $D_4$-extension of $k$ such that  
$\Gal(M/\Q) \simeq D_4\times C_2$. We find

\begin{theorem}\label{Th}
Let $k$ be a quadratic number field with discriminant $d$ 
and $M/k$ an unramified $D_4$-extension such that $M/\Q$
is normal. Then $\Gal(M/\Q) \simeq D_4 \times C_2$, and there exists 
a "$D_4$-factorization" $d = d_1d_2 \cdot d_3$ into 
discriminants $d_i$ such that
\begin{enumerate}
\item[(i)] $(d_i,d_j) = (1)$ for $i \ne j$, and at most one of $d_1$ 
	or $d_2$ is negative; 
\item[(ii)] $(d_1/p_2) = (d_2/p_1) = +1$ for all primes
   $p_1 \mid d_1$ and $p_2 \mid d_2$. 
\end{enumerate} 
Moreover, $M/k(\sqrt{d_j}\,)$ is cyclic for $j=3$ and of 
type $(2,2)$ for $j = 1 , 2$. 

If, on the other hand, $k/\Q$ is a quadratic extension with 
discriminant $d = \disc k$, and if $d = d_1d_2 \cdot d_3$ is a 
$D_4$-factorization, then there is an $\alpha \in k(\sqrt{d_1}\,)$ 
such that $M = k(\sqrt{d_1},\,\sqrt{d_2},\,\sqrt{\alpha}\,)$ 
is a $D_4$-extension with the following properties:
\begin{enumerate}
\item $M/k$ is unramified outside $\infty$;
\item $M/k(\sqrt{d_3}\,)$ is cyclic; 
\item $M/\Q$ is normal with Galois group $D_4 \times C_2$. 
\end{enumerate}
\end{theorem}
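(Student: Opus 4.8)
The statement has two halves — a necessary-conditions half (any unramified $D_4$-extension normal over $\Q$ yields a $D_4$-factorization) and a sufficiency/construction half — and the whole point of the section is that each half is a strictly easier analogue of the quaternion case already carried out. So my plan is to mirror the $H_8$ proofs almost verbatim, pointing out where the dihedral situation simplifies.

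For the necessary direction I would first pin down the ambient Galois group. By Corollary \ref{ChM} (Chebotarev's Monodromy Theorem), $\Gal(M/\Q)$ is generated by involutions outside $\Gal(M/k)$, and it is a group of order $16$ having a normal $D_4$ subgroup with $(2,2)$ quotient. Among the order-$16$ groups with these features, the generated-by-involutions condition forces $\Gal(M/\Q)\simeq D_4\times C_2$ (this is exactly the step where $C_2\times H_8$ was excluded before). Once the group is fixed I would read off its subgroup lattice — the analogue of Table 1 — to identify the three quadratic subfields $k=\Q(\sqrt d)$, $\Q(\sqrt{d_1d_2})$, and the fixed fields yielding $d_1,d_2,d_3$, thereby getting $d=d_1d_2\cdot d_3$. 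The splitting/ramification conditions (i) and (ii) then come from the inertia- and decomposition-group analysis: for a prime $p\mid d_i$ the inertia group $T_p(M/\Q)$ has order $2$ (since $M/k$ is unramified), the conjugacy/normaliser argument localises $Z_p$ inside a specified order-$8$ subgroup, and translating "$p$ lies in a decomposition field" into a Legendre-symbol statement gives $(d_1/p_2)=(d_2/p_1)=+1$. The pairwise coprimality and the sign restriction on $d_1,d_2$ follow, as in the $H_8$ case, from the observation that the involutions of the relevant dihedral subgroups cannot jointly absorb the generator of $T_p$, together with the same argument applied at an infinite prime. The last assertion — that $M/k(\sqrt{d_j}\,)$ is cyclic for $j=3$ and $(2,2)$ for $j=1,2$ — is just a matter of checking which of the three order-$8$ subgroups are cyclic-over-their-quadratic-subfield, i.e. applying the cyclicity criterion of Lemma \ref{L1}.

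For the sufficiency direction the paper has already sketched the cleanest route, and I would follow it: a $D_4$-factorization with $(d_1/p_2)=(d_2/p_1)=+1$ is precisely the Rédei--Reichardt condition guaranteeing an unramified $C_4$-extension $L$ of $F:=\Q(\sqrt{d_1d_2}\,)$. Rather than rebuild $L$ by hand I would invoke this classical fact (cited as \cite{Red,RR}), produce the governing element $\alpha\in k(\sqrt{d_1}\,)=\Q(\sqrt{d_1},\sqrt{d_1d_2}\,)$ from the associated conic $d_1X_1^2-d_2X_2^2=d_3X_3^2$ exactly as $\mu$ was produced in Proposition \ref{Pcon}, and set $M=kL=k(\sqrt{d_1},\sqrt{d_2},\sqrt\alpha\,)$. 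That $M/k$ is unramified outside $\infty$, that $\Gal(M/\Q)\simeq D_4\times C_2$, and that $M/k(\sqrt{d_3}\,)$ is cyclic are then verified by applying Lemma \ref{L1} to $K=\Q(\sqrt{d_1},\sqrt{d_2},\sqrt{d_3}\,)$ and computing the signature vector $S(\alpha,K/\Q)$; the cyclicity clause pinpoints which single $\alpha_\rho^{1+\rho}$ equals $-1$, and the ramification-outside-$2\infty$ and ramification-above-$2$ analyses transcribe directly from the $H_8$ construction (in fact they are lighter, since $D_4$ is generated by involutions).

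The main obstacle I anticipate is not any single hard estimate but the bookkeeping at the prime $2$ and at $\infty$: one must choose the rational scaling factor $r$ and adjust $\alpha$ by a discriminant dividing $d$ so that $\alpha$ becomes integral, $2$-primary, and of the right signature, exactly as in the two "Ramification" subsections for $H_8$. Because everything else reduces to group theory in $D_4\times C_2$ and to the already-proved Lemma \ref{L1} and Proposition \ref{P3}, I would present these routine verifications tersely and leave the detailed primary-element manipulations to the reader, as the section's opening sentence explicitly authorises (referring to \cite{Le2} for the general case).
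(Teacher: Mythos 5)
Your plan matches the paper's own treatment of Theorem~\ref{Th}, which is likewise only a sketch: the necessary conditions are obtained from the same inertia/decomposition-group analysis as in the $H_8$ case, and sufficiency from the R\'edei--Reichardt unramified cyclic quartic extension $L$ of $\Q(\sqrt{d_1d_2}\,)$ together with $M = kL$, with the remaining verifications delegated to Lemma~\ref{L1} and the ramification computations. The one place where ``mirror the $H_8$ argument'' needs care is the identification of $\Gal(M/\Q)$: the rival group $D_4 \curlyvee C_4$ (which also contains $D_4$ as an index-$2$ subgroup and has abelianization $(2,2,2)$) \emph{is} generated by involutions, so---unlike the exclusion of $C_2 \times H_8$---you must invoke the full strength of Corollary~\ref{ChM}, namely that the generating involutions lie \emph{outside} $\Gal(M/k)$ (only $\pm\rho\sigma$ qualify when $\Gal(M/k)$ is a dihedral subgroup of $D_4\curlyvee C_4$, and these generate a group of order $4$); you do state the correct form of the corollary, so your argument goes through.
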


\section*{Acknowledgement}
The author would like to thank R. Okazaki and K. Yamamura
for numerous suggestions, as well as the Laboratoire
d'Algorithmique Arithm\'etique eXp\'erimental at the
University of Bordeaux for their hospitality, and
the DFG, whose support has made my visit there possible.

\end{document}